\newtheorem{theorem}{Theorem}
\newtheorem{defn}{Definition}
\newtheorem{lem}{Lemma}
\newtheorem{proposition}{Proposition}
\newtheorem{corollary}{Corollary}
\newtheorem{example}{Example}
\newtheorem{remark}{Remark}
\newcommand{\w}{\omega}
\newcommand{\QQ}{\mathbb{Q}} 
\newcommand{\ZZ}{\mathbb{Z}} 
\newcommand{\id}{\operatorname{id}} 
\newcommand{\kk}{\mathbf{k}} 
\newcommand{\BB}{\mathbf{B}} 
\newcommand{\Des}{\operatorname{Des}}
\newcommand{\Comp}{\operatorname{Comp}}
\newcommand{\Par}{\operatorname{Par}}
\newcommand{\Set}{\operatorname{Set}}
\newcommand{\cLRM}{\operatorname{cLRM}}
\newcommand{\LRM}{\operatorname{LRM}}
\newcommand{\lex}{\operatorname{lex}}
\newcommand{\set}[1]{\left\{ #1 \right\}}
\newcommand{\abs}[1]{\left| #1 \right|}
\newcommand{\tup}[1]{\left( #1 \right)}
\newcommand{\ive}[1]{\left[ #1 \right]}
\newcommand{\ul}{\underline}
\providecommand*{\shuffle}{%
  \mathbin{\mathpalette\shuffle@{}}%
}
\newcommand*{\shuffle@}[2]{%
  \sbox0{$#1\vcenter{}$}%
  \kern .15\ht0 
  \rlap{\vrule height .25\ht0 depth 0pt width 2.5\ht0}%
  \raise.1\ht0\hbox to 2.5\ht0{%
    \vrule height 1.75\ht0 depth -.1\ht0 width .17\ht0 %
    \hfill
    \vrule height 1.75\ht0 depth -.1\ht0 width .17\ht0 %
    \hfill
    \vrule height 1.75\ht0 depth -.1\ht0 width .17\ht0 %
  }%
  \kern .15\ht0 
}
\title[The left-to-right minima basis]{The left-to-right minima basis of the group algebra of the symmetric group (updated version)}
\author[D. Grinberg \and E.A. Vassilieva]{Darij Grinberg\thanks{\href{mailto:darijgrinberg@gmail.com}{darijgrinberg@gmail.com}}\addressmark{1}, \and Ekaterina A. Vassilieva\thanks{\href{mailto:katya@lix.polytechnique.fr}{katya@lix.polytechnique.fr}}\addressmark{2}}
\address{\addressmark{1}Department of Mathematics, Drexel University, Philadelphia, PA 19104, USA \\ \addressmark{2}LIX, Ecole Polytechnique, Palaiseau, France}
\abstract{
We introduce a new basis of the group algebra of the symmetric group, built using the left-to-right minima sets of permutations.
We show that on this basis, the descent algebra acts by triangular operators, thus making it an analogue of a cellular basis.
The proof involves Dynkin elements (nested commutators) of the free algebra and their interactions with the $\BB$-basis.
}
\keywords{Left-to-right minima, group algebra, descent algebra, Dynkin operator}
\begin{document}

\maketitle

\section{Introduction}

For a nonnegative integer \(n\), let \(S_n\) denote the symmetric group on
\([n]=\{1,2,\dots,n\}\).  Fix a ground ring \(\kk\) and let
\(\mathcal{A} = \kk[S_n]\) be the group algebra of \(S_n\) over
\(\kk\). For \(w\in S_n\), the \emph{descent set} of \(w\) is
\[
\Des (w) =\{\, i \in \ive{n-1} \mid w(i)>w(i+1)\,\}.
\]
For every subset \(I\subseteq[n-1]\) define the element
\[
\mathbf{B}_I=\sum_{\substack{w\in S_n;\\ \Des (w) \subseteq I}} w
\in\mathcal{A}.
\]
The span of the elements \(\mathbf{B}_I\) is the classical \emph{descent
algebra} \(\Sigma_n\subseteq\mathcal{A}\) introduced by Solomon in \cite{Sol76}. We call the family
\((\mathbf{B}_I)_{I\subseteq[n-1]}\) the
\emph{\(\mathbf{B}\)-basis} of \(\Sigma_n\).
The aim of this paper is to construct and study a new basis
\((\beta_w)_{w\in S_n}\) of the whole algebra \(\mathcal{A}\) which is
compatible with the right-action of the descent algebra in the following
sense: there should exist a partial order \(\preceq\) on \(S_n\) such that
for every \(w\in S_n\) and every \(I\subseteq[n-1]\) one has
\[
\mathbf{B}_I\cdot \beta_w\;\in\;
\operatorname{span}\{\,\beta_u\mid u\preceq w\,\}.
\]
In other words, the action of \(\Sigma_n\) on \(\mathcal A\) should
be represented by triangular matrices on this basis.
Our basis will furthermore be compatible with a
natural family of right ideals of \(\mathcal A\) generated by the
\(\mathbf{B}\)-basis.  In the sequel we introduce a
poset-indexed filtration of \(\mathcal A\) built from the right ideals
\(\mathbf{B}_\alpha\mathcal A\) (reindexing \(\mathbf{B}_I\) by
compositions), construct our candidate basis, and prove that it is a
basis adapted to the filtration.

The paper is organised as follows.  In Section~\ref{sec.filtration} we
define the filtration, recall the necessary combinatorics of integer
compositions, and record basic properties of the \(\mathbf{B}\)-basis
action. Section~\ref{sec.lrm-basis} introduces the left-to-right-minima basis
and states the main theorem connecting it with the filtration.
The remaining sections are devoted to proofs and examples.

\section{A poset-indexed filtration of the group algebra}
\label{sec.filtration}

\subsection{Compositions and refinement}
\label{sec:compositions}

A \emph{composition} \(\alpha=(\alpha_1,\dots,\alpha_p)\) of a nonnegative
integer \(n\) (notation \(\alpha\vDash n\)) is a finite sequence of positive
integers with \(|\alpha|:=\sum_i\alpha_i=n\); write \(\ell(\alpha)=p\) for
its length.
Let $\widetilde{\alpha}$ denote the \emph{underlying partition} of $\alpha$, that is, the partition obtained by rearranging the parts of $\alpha$ in weakly decreasing order.
Two compositions $\alpha$ and $\beta$ will be called \emph{anagrams}
(of each other) if they satisfy
$\widetilde{\alpha} =\widetilde{\beta}$.
Let \(\Comp_n\) be the set of compositions of \(n\).
A well-known bijection between compositions of \(n\) and subsets of
\([n-1]\) sends each composition \(\alpha=(\alpha_1,\dots,\alpha_p)
\in \Comp_n\) to
\[
\operatorname{Set}(\alpha)
 := \{\alpha_1,\ \alpha_1+\alpha_2,\ \dots,\ \alpha_1+\cdots+\alpha_{p-1}\}\subseteq[n-1],
\]
and its inverse map sends each subset \(I=\{i_1<\cdots<i_{p-1}\}\subseteq[n-1]\) to
\(\operatorname{Comp}(I)=(i_1-i_0,i_2-i_1,\dots,i_p-i_{p-1})\in\Comp_n\),
where we set \(i_0 := 0\) and \(i_p := n\).
Accordingly we reindex the \(\mathbf{B}\)-basis by compositions by
setting
\(\mathbf{B}_\alpha:=\mathbf{B}_{\operatorname{Set}(\alpha)}\) for
\(\alpha\in\Comp_n\).

Compositions are partially ordered by refinement:

\begin{defn}[refinement]
Let \(\alpha=(\alpha_1,\dots,\alpha_m)\) and
\(\beta=(\beta_1,\dots,\beta_p)\) be compositions of the same integer
\(n\). We say that \(\alpha\) \emph{refines} \(\beta\), written
\(\alpha\preceq\beta\), if \(\alpha\) can be partitioned into \(p\)
contiguous sub-compositions \(\alpha^{(1)},\dots,\alpha^{(p)}\) with
\(|\alpha^{(j)}|=\beta_j\) for all \(j\).
\end{defn}

Partitions are also partially ordered by refinement, but of a different kind:

\begin{defn}[partition refinement]
Let \(\lambda,\mu\vdash n\) be partitions of \(n\).  We say that \(\lambda\) is a
\emph{partition refinement} of \(\mu\), written \(\lambda\preceq_\pi\mu\),
if some anagram of \(\lambda\) refines \(\mu\)
(as a composition).
\end{defn}

Equivalently, this relation $\preceq_\pi$ can be defined as follows:
Two partitions $\lambda = \tup{\lambda_1, \dots, \lambda_k}$
and $\mu = \tup{\mu_1, \dots, \mu_l}$ satisfy $\lambda \preceq_\pi \mu$
if and only if there exists a map $f : [k] \to [l]$ such that
each $j \in [l]$ satisfies $\mu_j = \sum \lambda_i$, where the sum
ranges over all $i \in f^{-1}(j)$.

Both relations $\preceq$ and $\preceq_\pi$ are weak partial orders;
their strict versions will be called $\prec$ and $\prec_\pi$.
Note that $\alpha \preceq \beta$ implies
$\widetilde{\alpha} \preceq_\pi \widetilde{\beta}$.

For any composition $\alpha = \tup{\alpha_1, \ldots, \alpha_k} \vDash n$,
we define the \emph{\(\alpha\)-partial sums} to be the $k+1$ integers
\[
\alpha_{\le i} := \alpha_1 + \alpha_2 + \cdots + \alpha_i
\qquad \text{for all } 0 \leq i \leq k,
\]
and we define the \emph{\(\alpha\)-blocks} to be the $k$ subsets%
\footnote{The notation $\ive{u, v}$ for two integers $u$ and $v$
denotes the integer interval $\set{x \in \ZZ \mid u \leq x \leq v}$.}
\begin{align}
\Set(\alpha)_i := \ive{ \alpha_{\le i-1} + 1, \ \alpha_{\le i} }
\text{ of } \ive{n}
\qquad \text{for all } i \in [k].
\label{eq.alpha-blocks}
\end{align}
These \(\alpha\)-blocks
\(\Set(\alpha)_1, \Set(\alpha)_2, \ldots, \Set(\alpha)_k\)
are $k$ disjoint nonempty integer intervals
of respective lengths \(\alpha_1, \alpha_2, \ldots, \alpha_k\),
and their union is $[n]$.
The $\BB$-basis element $\BB_\alpha$ is then the sum of all
permutations $w \in S_n$ that increase on each $\alpha$-block.

\subsection{Action of the \(\mathbf{B}\)-basis and right ideals}
\label{sec:B-action}

Elements of the \(\mathbf{B}\)-basis can be conveniently
multiplied using the
\emph{Solomon's Mackey formula} (see e.g. \cite[Theorem 2]{Willigenburg98}).
If \(M\) is a 
nonnegative integer matrix
(i.e., matrix with nonnegative integer entries),
denote by \(\operatorname{rows}(M)\) and
\(\operatorname{cols}(M)\) its row- and column-sum vectors and by
\(\operatorname{read}(M)\) the composition obtained by concatenating the
rows of \(M\) (omitting zeros).  Given \(\alpha,\beta\in\Comp_n\), write
\(\mathbb N^{\alpha,\beta}\) for the set of nonnegative integer matrices
with column sums \(\alpha\) and row sums \(\beta\).  Solomon's formula
(in our notation) reads
\begin{align}
\mathbf{B}_\alpha\mathbf{B}_\beta
   = \sum_{M\in\mathbb N^{\alpha,\beta}} \mathbf{B}_{\operatorname{read}(M)}.
\label{eq.solmac}
\end{align}
In particular every composition \(\operatorname{read}(M)\) on the right
refines \(\beta\) and satisfies \(\widetilde{\operatorname{read}(M)}
\preceq_\pi \widetilde{\alpha}\); consequently
left multiplication by \(\mathbf{B}_\alpha\) is triangular on the basis
\((\mathbf{B}_\gamma)_{\gamma\in\Comp_n}\) of $\Sigma_n$ with respect to refinement.

\begin{defn}[$\mathbf{B}$-right ideals]
For \(\beta\in\Comp_n\) define the right ideal
\[
\mathcal R_\beta := \mathbf{B}_\beta\,\mathcal A \subseteq \mathcal A.
\]
\end{defn}

It is easy to see that the right ideal \(\mathcal R_\alpha\) depends only
on the multiset of parts of \(\alpha\) (i.e.\ on the corresponding
integer partition), not on their order.

\begin{lem}\label{prop.Rbeta.sym}
If \(\alpha,\beta\in\Comp_n\) are anagrams
(that is, \(\widetilde{\alpha} = \widetilde{\beta}\)),
then \(\mathcal R_\alpha=\mathcal R_\beta\).
\end{lem}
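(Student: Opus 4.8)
The plan is to prove the two inclusions $\mathcal R_\alpha \subseteq \mathcal R_\beta$ and $\mathcal R_\beta \subseteq \mathcal R_\alpha$ separately. Since the relation $\widetilde\alpha = \widetilde\beta$ is symmetric in $\alpha$ and $\beta$, it suffices to establish one of them, say $\mathcal R_\beta \subseteq \mathcal R_\alpha$, and for that it is enough to check that $\BB_\beta \in \BB_\alpha \mathcal A$. I would do this by exhibiting an explicit permutation $\sigma \in S_n$ with $\BB_\alpha \sigma = \BB_\beta$.

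The main tool is the combinatorial description of the $\BB$-basis recorded just before the lemma: $\BB_\alpha$ is the sum of all $w \in S_n$ that increase on each $\alpha$-block, and similarly $\BB_\beta$ is the sum of all $w \in S_n$ that increase on each $\beta$-block. Writing $k := \ell(\alpha) = \ell(\beta)$ and using that $\alpha$ and $\beta$ are anagrams, I would first fix a permutation $\pi \in S_k$ with $\beta_j = \alpha_{\pi(j)}$ for all $j \in [k]$. Then $\abs{\Set(\beta)_j} = \beta_j = \alpha_{\pi(j)} = \abs{\Set(\alpha)_{\pi(j)}}$, so there is a unique increasing bijection $\Set(\beta)_j \to \Set(\alpha)_{\pi(j)}$; since the $\beta$-blocks partition $[n]$ and so do the $\alpha$-blocks, these $k$ bijections glue into a single permutation $\sigma \in S_n$, characterized by the property that it sends each $\beta$-block $\Set(\beta)_j$ increasingly onto the $\alpha$-block $\Set(\alpha)_{\pi(j)}$.

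The heart of the argument is the claim that the map $w \mapsto w\sigma$ (composition of functions $[n] \to [n]$) restricts to a bijection from the set of permutations increasing on every $\alpha$-block onto the set of permutations increasing on every $\beta$-block. This is short: on $\Set(\beta)_j$ the function $w\sigma$ equals the increasing bijection $\Set(\beta)_j \to \Set(\alpha)_{\pi(j)}$ postcomposed with the restriction $w|_{\Set(\alpha)_{\pi(j)}}$, which is increasing by hypothesis, so $w\sigma$ increases on every $\beta$-block; and the same reasoning applied to $\sigma^{-1}$ (which carries each $\alpha$-block $\Set(\alpha)_m$ increasingly onto $\Set(\beta)_{\pi^{-1}(m)}$) shows that $w \mapsto w\sigma^{-1}$ is a two-sided inverse of this restriction. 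Summing over these two supports then yields $\BB_\alpha \sigma = \BB_\beta$, hence $\BB_\beta \in \BB_\alpha \mathcal A = \mathcal R_\alpha$, hence $\mathcal R_\beta \subseteq \mathcal R_\alpha$; the opposite inclusion follows by swapping the roles of $\alpha$ and $\beta$, and the two inclusions give the lemma.

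I do not expect a genuine obstacle here. The two points to keep an eye on are the well-definedness of $\sigma$ --- which uses precisely that the $\alpha$-blocks and the $\beta$-blocks are two partitions of $[n]$ with matching block sizes --- and the convention chosen for the product in $\mathcal A$: with the opposite (``act on values'') convention one simply replaces $\sigma$ by $\sigma^{-1}$, the combinatorics being identical. If one would rather not choose $\pi$ all at once, an alternative is to observe that $\beta$ is obtained from $\alpha$ by a sequence of swaps of adjacent parts and to invoke transitivity of the equality of right ideals, thereby reducing to the case in which $\pi$ is a single adjacent transposition --- in which case $\sigma$ moves only the two $\alpha$-blocks being interchanged.
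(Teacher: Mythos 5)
Your proof is correct and follows essentially the same route as the paper's: both arguments construct an explicit permutation carrying one family of blocks onto the other order-preservingly (your $\sigma$ is, up to inversion and the choice of $\pi$, the paper's $w$) and read off $\BB_\alpha\sigma=\BB_\beta$ from the block description of the $\BB$-basis. The paper states this in one line and leaves the bijection $w\mapsto w\sigma$ implicit; you spell out the verification that $w\sigma$ increases on each $\beta$-block and the two-sided inverse, which is exactly the content of the paper's ``we easily find.''
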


\begin{proof}
If \(\beta\) is an anagram of \(\alpha\), then there
exists a permutation \(w\in S_n\) sending the \(\alpha\)-blocks to the
\(\beta\)-blocks while preserving the order of the elements in each
block.
Thus, we easily find
\( \BB_\beta w = \BB_\alpha \), and therefore
\(\mathbf{B}_\alpha\mathcal A=\mathbf{B}_\beta w\mathcal A=\mathbf{B}_\beta\mathcal A\).
\end{proof}

The family \((\mathcal R_\beta)_{\beta\in\Comp_n}\) is a poset-indexed
filtration of \(\mathcal A\) (ordered by refinement).

\begin{lem}\label{prop.Rbeta.sub}
If \(\beta,\gamma\in\Comp_n\) and \(\beta\preceq\gamma\) (i.e. \(\beta\) refines \(\gamma\)), then
\(\mathcal R_\beta\subseteq\mathcal R_\gamma\).
\end{lem}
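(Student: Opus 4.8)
The plan is to reduce the statement to a single membership: it is enough to show $\BB_\beta \in \BB_\gamma \mathcal{A}$, since then $\mathcal{R}_\beta = \BB_\beta \mathcal{A} \subseteq \BB_\gamma \mathcal{A}\cdot \mathcal{A} = \BB_\gamma \mathcal{A} = \mathcal{R}_\gamma$. To produce the required element I would factor each permutation occurring in $\BB_\beta$ as a ``coarse'' permutation occurring in $\BB_\gamma$, followed by a permutation that stays inside the $\gamma$-blocks. Concretely: write $G_j := \Set(\gamma)_j$ for the $\gamma$-blocks, and note that since $\beta \preceq \gamma$, each $G_j$ is a disjoint union of consecutive $\beta$-blocks. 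For $\alpha \vDash n$ let $Y^\alpha := \set{w \in S_n \mid \Des(w) \subseteq \Set(\alpha)}$, so that $\BB_\alpha = \sum_{w \in Y^\alpha} w$. Let $P \leq S_n$ be the subgroup of permutations fixing each $G_j$ setwise, and set $Z := \set{v \in P \mid v \text{ is increasing on every } \beta\text{-block}}$. Using the convention that the product $uv$ in $S_n$ is the composite $i \mapsto u\tup{v(i)}$, the key claim is that the multiplication map
\[
Y^\gamma \times Z \longrightarrow Y^\beta, \qquad (u,v) \longmapsto uv,
\]
is a bijection. Given this, $\BB_\gamma \cdot \bigl(\sum_{v \in Z} v\bigr) = \sum_{u \in Y^\gamma}\sum_{v \in Z} uv = \sum_{w \in Y^\beta} w = \BB_\beta$, so $x := \sum_{v \in Z} v \in \mathcal{A}$ does the job, and the whole argument is valid over an arbitrary ground ring $\kk$.

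To prove the bijection I would first check that $(u,v)\mapsto uv$ lands in $Y^\beta$: if $B$ is a $\beta$-block contained in $G_j$, then $v$ maps $B$ increasingly into $G_j$ while $u$ is increasing on all of $G_j$, hence increasing on the subset $v(B)$, so $uv$ is increasing on $B$. For the inverse, given $w \in Y^\beta$ I would take $u$ to be the unique permutation of $[n]$ sending each $G_j$ increasingly onto the set $w(G_j)$ --- this is well defined because $w(G_1), \dots, w(G_p)$ partition $[n]$. Then $u \in Y^\gamma$ by construction, and $v := u^{-1}w$ satisfies $v(G_j) = u^{-1}\bigl(w(G_j)\bigr) = G_j$, so $v \in P$; moreover $v$ is increasing on each $\beta$-block $B \subseteq G_j$, because $w|_B$ is increasing, $w(B) \subseteq w(G_j) = u(G_j)$, and $u^{-1}$ is increasing on $u(G_j)$. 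Hence $v \in Z$ and $uv = w$. For uniqueness, any factorization $w = u'v'$ with $u' \in Y^\gamma$ and $v' \in Z \subseteq P$ forces $u'(G_j) = u'\bigl(v'(G_j)\bigr) = w(G_j)$ with $u'|_{G_j}$ increasing, which determines $u' = u$ and then $v' = v$.

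I expect this bijection to be the main obstacle, and within it the construction of the coarse part $u$: the naive choice $u = w|_{G_j}$ blockwise fails, because $w$ need not be increasing on $G_j$ --- only the image sets $w(G_j)$ may be used. Conceptually this bijection is nothing but the transitivity of minimal-length coset representatives for the nested Young subgroups $S_\beta \leq S_\gamma$, which is available precisely because $\beta$ refines $\gamma$; the argument sketched above is a self-contained instance of it, and since it is an identity of elements of $\ZZ[S_n]$ it passes to any $\kk$.
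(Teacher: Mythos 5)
Your proposal is correct and takes essentially the same route as the paper: both arguments factor $\BB_\beta = \BB_\gamma\cdot x$ where $x$ is the sum of all permutations that fix each $\gamma$-block setwise and are increasing on each $\beta$-block (the paper calls this $\BB_{\gamma\to\beta}$, you call it $\sum_{v\in Z}v$). The paper establishes the identity by expanding $\BB_\beta$ as a double sum over set compositions $\mathbf{J}$ and grouping each inner sum as $\rho_{\mathbf{J}}\BB_{\gamma\to\beta}$; your bijection $Y^\gamma\times Z\to Y^\beta$, $(u,v)\mapsto uv$, is a direct reformulation of that same decomposition (your $u$ is the paper's $\rho_{\mathbf{J}}$ for $\mathbf{J}=(w(G_1),\ldots,w(G_k))$), presented as a bijection rather than as a regrouped sum.
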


\begin{proof}
Write the composition $\gamma$ as $\gamma=\left(  \gamma_{1},\gamma_{2}%
,\ldots,\gamma_{k}\right)  $, and let
\[
\left(  I_{1},I_{2},\ldots
,I_{k}\right)  :=\left(  \operatorname*{Set}\left(  \gamma\right)
_{1},\operatorname*{Set}\left(  \gamma\right)  _{2},\ldots,\operatorname*{Set}%
\left(  \gamma\right)  _{k}\right)
\]
be the list of the $\gamma$-blocks. Let
$\mathbf{B}_{\gamma\rightarrow\beta}$ be the sum of those permutations $w\in
S_{n}$ that preserve the $\gamma$-blocks (i.e., satisfy $w\left(
I_{p}\right)  =I_{p}$ for all $p$) and satisfy $\Des w
\subseteq\operatorname*{Set}\left(  \beta\right)  $. By the definition of
$\mathbf{B}_{\beta}$, we have%
\begin{align*}
\mathbf{B}_{\beta} &  =\sum_{\substack{w\in S_{n};\\ \Des w
\subseteq\operatorname*{Set}\left(  \beta\right)  }}w=\sum
_{\substack{\mathbf{J}=\left(  J_{1},J_{2},\ldots,J_{k}\right)  \text{
is}\\\text{a set composition of }\left[  n\right]  ;\\\left\vert
J_{p}\right\vert =\gamma_{p}\text{ for each }p}}\ \ \underbrace{\sum
_{\substack{w\in S_{n};\\ \Des w \subseteq\operatorname*{Set}%
\left(  \beta\right)  ;\\w\left(  I_{p}\right)  =J_{p}\text{ for each }p}%
}w}_{\substack{=\rho_{\mathbf{J}}\mathbf{B}_{\gamma\rightarrow\beta}%
\text{,}\\\text{where }\rho_{\mathbf{J}}\text{ is the unique permutation of
}\left[  n\right]  \\\text{that sends each }I_{p}\text{ to }J_{p}\text{
order-preservingly}}}\\
&  =\underbrace{\sum_{\substack{\mathbf{J}=\left(  J_{1},J_{2},\ldots
,J_{k}\right)  \text{ is}\\\text{a set composition of }\left[  n\right]
;\\\left\vert J_{p}\right\vert =\gamma_{p}\text{ for each }p}}\rho
_{\mathbf{J}}}_{\substack{=\mathbf{B}_{\gamma}\\\text{(since the addends }%
\rho_{\mathbf{J}}\text{ of this sum are}\\\text{precisely the }w\in
S_{n}\text{ satisfying } \Des w \subseteq\operatorname*{Set}%
\left(  \gamma\right)  \text{)}}}\mathbf{B}_{\gamma\rightarrow\beta
}=\mathbf{B}_{\gamma}\mathbf{B}_{\gamma\rightarrow\beta}.
\end{align*}
Hence
\(\mathbf{B}_\beta\mathcal A=\mathbf{B}_\gamma\mathbf{B}_{\gamma\to\beta}\mathcal A\subseteq\mathbf{B}_\gamma\mathcal A\).
In other words,
\( \mathcal R_\beta \subseteq \mathcal R_\gamma \).
\end{proof}

\begin{proposition}\label{cor.Rbeta.subpar}
If \(\beta,\gamma\in\Comp_n\) and \(\widetilde{\beta}\preceq_\pi\widetilde{\gamma}\), then
\(\mathcal R_\beta\subseteq\mathcal R_\gamma\).
\end{proposition}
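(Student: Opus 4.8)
The plan is to combine the two previous lemmas with the definition of $\preceq_\pi$, after first normalizing both compositions to their underlying partitions. Concretely, I would first invoke Lemma~\ref{prop.Rbeta.sym} twice to replace $\beta$ and $\gamma$ by their underlying partitions: since $\beta$ is an anagram of $\widetilde\beta$ and $\gamma$ is an anagram of $\widetilde\gamma$, we have $\mathcal R_\beta = \mathcal R_{\widetilde\beta}$ and $\mathcal R_\gamma = \mathcal R_{\widetilde\gamma}$. Thus it suffices to prove $\mathcal R_{\widetilde\beta} \subseteq \mathcal R_{\widetilde\gamma}$.

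Next, I would unwind the hypothesis $\widetilde\beta \preceq_\pi \widetilde\gamma$. By the definition of partition refinement, this means that some anagram $\alpha$ of $\widetilde\beta$ refines $\widetilde\gamma$ as a composition, i.e. $\alpha \preceq \widetilde\gamma$ (here $\widetilde\gamma$ is viewed as an element of $\Comp_n$). Applying Lemma~\ref{prop.Rbeta.sym} once more, this time to the anagrams $\alpha$ and $\widetilde\beta$, gives $\mathcal R_\alpha = \mathcal R_{\widetilde\beta}$.

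Finally, since $\alpha \preceq \widetilde\gamma$, Lemma~\ref{prop.Rbeta.sub} yields $\mathcal R_\alpha \subseteq \mathcal R_{\widetilde\gamma}$. Chaining the equalities with this inclusion gives $\mathcal R_\beta = \mathcal R_{\widetilde\beta} = \mathcal R_\alpha \subseteq \mathcal R_{\widetilde\gamma} = \mathcal R_\gamma$, which is the claim.

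As for the main obstacle: there is essentially none of substance — the proposition is a purely formal consequence of Lemmas~\ref{prop.Rbeta.sym} and~\ref{prop.Rbeta.sub} together with the definition of $\preceq_\pi$. The only point requiring a little care is to apply the definition of $\preceq_\pi$ in the correct direction (an anagram of the \emph{smaller} partition $\widetilde\beta$ refines the \emph{larger} composition $\widetilde\gamma$, not the reverse), and to remember that $\widetilde\gamma$ is being used as a composition when we speak of refinement.
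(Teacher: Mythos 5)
Your proof is correct and takes essentially the same route as the paper's: normalize via Lemma~\ref{prop.Rbeta.sym} to pass to an anagram of $\beta$ that literally refines $\gamma$, then apply Lemma~\ref{prop.Rbeta.sub}. The paper states this more tersely, but the argument is the same.
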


\begin{proof}
Lemma~\ref{prop.Rbeta.sym} shows that
\(\mathcal R_\beta\) does not change if we permute
the entries of \(\beta\).
But this allows us to transform \(\widetilde{\beta}\preceq_\pi\widetilde{\gamma}\) into
\(\beta\preceq\gamma\),
and then the claim follows from Lemma~\ref{prop.Rbeta.sub}.
\end{proof}


Hence \((\mathcal R_\beta)_{\beta\in\Comp_n}\) is a filtration of
\(\mathcal A\) by right
\(\mathcal A\)-submodules that can also be relabelled (removing
redundant copies) as \((\mathcal R_\beta)_{\beta\in\Par_n}\),
where \(\Par_n\) is the set of all partitions of \(n\).
Furthermore, these right ideals are stable under the
left action of \(\Sigma_n\):

\begin{proposition}\label{prop.Rbeta.subbi}
For every \(\beta\in\Comp_n\), the subspace \(\mathcal R_\beta\) is a
\((\Sigma_n,\mathcal A)\)-subbimodule of \(\mathcal A\).
\end{proposition}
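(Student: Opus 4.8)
The plan is to notice that $\mathcal{R}_\beta = \mathbf{B}_\beta\,\mathcal{A}$ is, by its very definition, a right $\mathcal{A}$-submodule of $\mathcal{A}$; hence the only thing that needs proof is that $\mathcal{R}_\beta$ is stable under \emph{left} multiplication by $\Sigma_n$. Since $\Sigma_n = \operatorname{span}\{\mathbf{B}_\alpha \mid \alpha \in \Comp_n\}$, by linearity it suffices to check that $\mathbf{B}_\alpha \mathcal{R}_\beta \subseteq \mathcal{R}_\beta$ for every $\alpha \in \Comp_n$.

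To do this, I would first compute $\mathbf{B}_\alpha \mathbf{B}_\beta$ via Solomon's Mackey formula \eqref{eq.solmac}, which gives $\mathbf{B}_\alpha \mathbf{B}_\beta = \sum_{M \in \mathbb{N}^{\alpha,\beta}} \mathbf{B}_{\operatorname{read}(M)}$, where (as already observed right after that formula) every composition $\operatorname{read}(M)$ occurring here refines $\beta$. By Lemma~\ref{prop.Rbeta.sub}, for each such $M$ we then have $\mathcal{R}_{\operatorname{read}(M)} \subseteq \mathcal{R}_\beta$, and in particular $\mathbf{B}_{\operatorname{read}(M)} = \mathbf{B}_{\operatorname{read}(M)} \cdot 1 \in \mathcal{R}_\beta$. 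Summing over $M$ yields $\mathbf{B}_\alpha \mathbf{B}_\beta \in \mathcal{R}_\beta$.

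Finally, because $\mathcal{R}_\beta$ is a right ideal of the unital algebra $\mathcal{A}$, I conclude $\mathbf{B}_\alpha \mathcal{R}_\beta = \mathbf{B}_\alpha \mathbf{B}_\beta\,\mathcal{A} \subseteq \mathcal{R}_\beta\,\mathcal{A} = \mathcal{R}_\beta$, and extending linearly over $\Sigma_n$ finishes the proof. I do not expect a genuine obstacle here: the statement is a formal consequence of Solomon's formula together with the triangularity recorded in Lemma~\ref{prop.Rbeta.sub}; the only point worth stating explicitly is that the right-ideal half of the bimodule condition is automatic from the definition of $\mathcal{R}_\beta$, so that all the content lies in the left-stability, which left-triangularity of the $\mathbf{B}$-basis action supplies.
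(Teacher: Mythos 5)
Your proof is correct and follows essentially the same route as the paper's: you reduce to left-stability, apply Solomon's Mackey formula to get $\mathbf{B}_\alpha\mathbf{B}_\beta$ as a sum of $\mathbf{B}_\gamma$ with $\gamma$ refining $\beta$, and invoke Lemma~\ref{prop.Rbeta.sub} together with the right-ideal property to conclude. No gaps and no meaningful divergence from the paper's argument.
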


\begin{proof}
Right \(\mathcal A\)-stability is immediate from the definition.
Left-stability follows from Solomon's formula \eqref{eq.solmac}: for any
\(\alpha\in\Comp_n\) the product \(\mathbf{B}_\alpha\mathbf{B}_\beta\) is a
linear combination of \(\mathbf{B}_\gamma\) with \(\gamma\) refining
\(\beta\); by Lemma~\ref{prop.Rbeta.sub} each such \(\mathbf{B}_\gamma\)
generates a right ideal contained in \(\mathcal R_\beta\).  Therefore
\(\mathbf{B}_\alpha\mathcal R_\beta\subseteq\mathcal R_\beta\) for all
\(\alpha\), whence \(\Sigma_n\cdot\mathcal R_\beta\subseteq\mathcal R_\beta\).
\end{proof}
We look at the quotient spaces
\[
\mathcal R_\beta/\sum_{\alpha\prec\beta}\mathcal R_\alpha,
\]
where the sum runs over compositions \(\alpha\) strictly finer than
\(\beta\), or, equivalently, over compositions \(\alpha\) satisfying
\(\widetilde{\alpha} \prec_\pi \widetilde{\beta}\)
(the latter sum has more addends than the former, but
Lemma~\ref{prop.Rbeta.sym} shows that these extra addends
merely duplicate existing addends).
Left multiplication by \(\mathbf{B}_\alpha\) acts on these
quotient spaces by scalar multiplication:

\begin{theorem}\label{thm.Rbeta.act-on-quot}
Fix \(\alpha,\beta\in\Comp_n\). Let \(\eta_\beta(\alpha)\) be the number
of functions \(f:[\ell(\beta)]\to[\ell(\alpha)]\) such that for every
\(j\in[\ell(\alpha)]\) one has \(\alpha_j=\sum_{i\in f^{-1}(j)}\beta_i\).
Then left multiplication by \(\mathbf{B}_\alpha\) acts on the quotient
\(\mathcal R_\beta/\sum_{\alpha'\prec\beta}\mathcal R_{\alpha'}\) as
multiplication by the scalar \(\eta_\beta(\alpha)\).
\end{theorem}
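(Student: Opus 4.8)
The plan is to compute the induced action directly, using Solomon's Mackey formula \eqref{eq.solmac} together with the filtration results already established. First I would observe that the quotient $\mathcal{R}_\beta / \sum_{\alpha' \prec \beta}\mathcal{R}_{\alpha'}$ is well-defined and carries a left $\Sigma_n$-action: by Lemma~\ref{prop.Rbeta.sub} every $\mathcal{R}_{\alpha'}$ with $\alpha' \prec \beta$ lies inside $\mathcal{R}_\beta$, and by Proposition~\ref{prop.Rbeta.subbi} both $\mathcal{R}_\beta$ and the subspace $\sum_{\alpha' \prec \beta}\mathcal{R}_{\alpha'}$ are stable under left multiplication by any $\mathbf{B}_\gamma$, so left multiplication by $\mathbf{B}_\alpha$ descends to a linear endomorphism of the quotient. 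Since $\mathcal{R}_\beta = \mathbf{B}_\beta \mathcal{A}$, the elements $\mathbf{B}_\beta x$ with $x \in \mathcal{A}$ span $\mathcal{R}_\beta$, hence their images span the quotient, so it will be enough to show that $\mathbf{B}_\alpha \mathbf{B}_\beta x \equiv \eta_\beta(\alpha)\,\mathbf{B}_\beta x$ modulo $\sum_{\alpha' \prec \beta}\mathcal{R}_{\alpha'}$ for every $x \in \mathcal{A}$.

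Next I would expand $\mathbf{B}_\alpha \mathbf{B}_\beta = \sum_{M \in \mathbb{N}^{\alpha,\beta}} \mathbf{B}_{\operatorname{read}(M)}$ via \eqref{eq.solmac} and split this sum according to whether $\operatorname{read}(M) = \beta$ or $\operatorname{read}(M) \prec \beta$ strictly --- these being the only options, since every $\operatorname{read}(M)$ refines $\beta$. For a matrix $M$ with $\operatorname{read}(M) \prec \beta$, the term $\mathbf{B}_{\operatorname{read}(M)} x$ lies in $\mathbf{B}_{\operatorname{read}(M)}\mathcal{A} = \mathcal{R}_{\operatorname{read}(M)}$, which is literally one of the summands of $\sum_{\alpha' \prec \beta}\mathcal{R}_{\alpha'}$, hence dies in the quotient. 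For a matrix $M$ with $\operatorname{read}(M) = \beta$, the term is simply $\mathbf{B}_\beta x$. So modulo $\sum_{\alpha' \prec \beta}\mathcal{R}_{\alpha'}$ we obtain $\mathbf{B}_\alpha \mathbf{B}_\beta x \equiv N\,\mathbf{B}_\beta x$, where $N$ is the number of matrices $M \in \mathbb{N}^{\alpha,\beta}$ with $\operatorname{read}(M) = \beta$.

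The last step, and the only one with any combinatorial content, is to check that $N = \eta_\beta(\alpha)$. Since every row sum $\beta_i$ is positive, every row of such an $M$ is nonzero, so the number of nonzero entries of $M$ (which is $\ell(\operatorname{read}(M))$) is at least $\ell(\beta)$, with equality exactly when each row contains a single nonzero entry; and when that happens, the unique nonzero entry of row $i$ is the row sum $\beta_i$ and reading the rows top-to-bottom reproduces $\beta$, so in fact $\operatorname{read}(M) = \beta$ holds if and only if $M$ has exactly one nonzero entry per row. Recording, for each such $M$, the column $f(i)$ of the nonzero entry in row $i$ sets up a bijection between these matrices and the functions $f : [\ell(\beta)] \to [\ell(\alpha)]$, under which the column-sum constraint $\operatorname{cols}(M) = \alpha$ becomes precisely $\alpha_j = \sum_{i \in f^{-1}(j)} \beta_i$ for all $j$; thus $N = \eta_\beta(\alpha)$. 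I do not anticipate a genuine obstacle here: the argument is entirely a matter of reading off the consequences of Solomon's formula and the filtration, the only point needing care being this length count that pins down the surviving matrices.
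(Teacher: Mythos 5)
Your proof is correct and follows essentially the same route as the paper's (very terse) proof: both use Solomon's Mackey formula to split $\mathbf{B}_\alpha \mathbf{B}_\beta$ into a multiple of $\mathbf{B}_\beta$ plus terms with strictly finer index, and identify the coefficient of $\mathbf{B}_\beta$ as $\eta_\beta(\alpha)$ via the bijection between one-nonzero-entry-per-row matrices in $\mathbb{N}^{\alpha,\beta}$ and the functions $f$. Your version simply fills in the details that the paper leaves implicit (well-definedness of the quotient action, the length-count argument showing $\operatorname{read}(M)=\beta$ iff $M$ has one nonzero entry per row).
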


\begin{proof}
This is an easy corollary of Solomon's formula \eqref{eq.solmac}.
Indeed, all matrices $M$ on the right hand side of
\eqref{eq.solmac} satisfy $\operatorname{read}(M) \preceq \beta$.
Among these matrices $M$, the ones that satisfy
$\operatorname{read}(M) = \beta$ are precisely the ones that have exactly one
nonzero entry on each row.
The number of these latter matrices is
\(\eta_\beta(\alpha)\), since they can be encoded by the
function $f : \ive{\ell(\beta)} \to \ive{\ell(\alpha)}$ where
$f\tup{i}$ is the position of the unique nonzero entry in the
$i$-th row of $M$.
Hence, \eqref{eq.solmac} becomes
\begin{align}
\BB_\alpha \BB_\beta = \eta_\beta(\alpha) \BB_\beta +
\tup{\text{a sum of } \BB_{\alpha'} \text{ with } \alpha' \prec \beta}.
\label{eq.BaBb=sum-prec}
\end{align}
Since $\mathcal{R}_\beta = \BB_\beta \mathcal{A}$, this proves the claim.
\end{proof}

\begin{remark}
It can be shown that if $\kk$ is a $\QQ$-algebra, then
Proposition~\ref{cor.Rbeta.subpar} can be strengthened
as follows:
If \(\beta,\gamma\in\Comp_n\) and \(\widetilde{\beta}\preceq_\pi\widetilde{\gamma}\), then
\(\BB_\beta \Sigma_n \subseteq \BB_\gamma \Sigma_n \).
However, our Proposition~\ref{cor.Rbeta.subpar} holds
independently of $\kk$.
\end{remark}

\section{The left-to-right minima basis of $\mathcal{A}$}
\label{sec.lrm-basis}
\subsection{Left-to-right minima of a permutation}

We begin by recalling a classical statistic on permutations.

\begin{defn}[Left-to-right minima]
Let $w\in S_n$.  
The \emph{left-to-right minima} (short: \emph{LRMs})
of $w$ are the entries 
\[
\operatorname{LRM}(w)
 := \Bigl\{\, i\in[n]\ \big|\ \forall\, k<w^{-1}(i),\;\; w(k)>i \Bigr\}.
\]
Thus $i$ belongs to $\operatorname{LRM}(w)$ if it is smaller than every entry appearing to its left in the one-line notation of $w$.
Clearly, $\operatorname{LRM}(w)$ always contains $1$ and $w(1)$.
We also define the shifted set
\[
\operatorname{LRM}'(w)
 := \{\, \ell-1\mid \ell\in\operatorname{LRM}(w),\;\ell>1\}
 \subseteq [n-1],
\]
and finally the associated composition
(see \S\ref{sec:compositions} for notations)
\[
\operatorname{cLRM}'(w)
 := \Comp\bigl(\operatorname{LRM}'(w)\bigr).
\]
\end{defn}

The following easy fact relates the LRMs of a permutation to those of its inverse.

\begin{remark}
\label{lem.LRM.inverse}
For every $w\in S_n$, one has
$\operatorname{LRM}(w)= w\bigl(\operatorname{LRM}(w^{-1})\bigr)$.
\end{remark}


\begin{example}
Lemma~\ref{lem.LRM.inverse} says that the LRMs of $w^{-1}$ occur at the \emph{LRM positions} of $w$.  
For example, if $w = \mathbf{6}7\mathbf{2}49\mathbf{1}853$,
then $\operatorname{LRM}(w)=\{6,2,1\}$,
whereas
$w^{-1} = \mathbf{63}948\mathbf{1}275$
and $\operatorname{LRM}(w^{-1})=\{6,3,1\}$,
which matches the fact that the LRMs of $w$ appear at positions $6,3,1$.
\end{example}

\subsection{A new basis of the group algebra}

For each permutation $w\in S_n$ we consider the element
\[
\mathbf{B}_{\operatorname{LRM}'(w)}\, w \;\in\; \mathcal{A}.
\]

We shall prove that these vectors form a basis of $\mathcal{A}$.
Two lemmas will guide our way.

\begin{lem}
\label{lem.lastdes}Let $u\in S_{n}$ be a permutation that has at least one
descent. Let $m$ be the largest descent of $u$. Let $\ell =m+1$. Then:

\begin{enumerate}
\item[\textbf{(a)}] We have $u\left( \ell \right) <\ell $.

\item[\textbf{(b)}] We have $u\left( i\right) \leq i$ for all $i>\ell $.
\end{enumerate}
\end{lem}

\begin{proof}
\textbf{(a)} Since $\ell -1=m$ is the largest descent of $u$, we have%
\begin{equation}
u\left( \ell -1\right) >u\left( \ell \right) <u\left( \ell +1\right)
<u\left( \ell +2\right) <\cdots <u\left( n\right) .  \label{pf.lem.lastdes.1}
\end{equation}%
Thus, the $n-\ell +1$ distinct numbers $u\left( \ell -1\right) ,\ u\left(
\ell +1\right) ,\ u\left( \ell +2\right) ,\ \ldots ,\ u\left( n\right) $ are
all larger than $u\left( \ell \right) $, hence belong to the interval $\left[
u\left( \ell \right) +1,\ n\right] $. Therefore, this interval must contain
at least $n-\ell +1$ numbers. In other words, $\left\vert \left[ u\left(
\ell \right) +1,\ n\right] \right\vert \geq n-\ell +1>n-\ell $. Since $%
\left\vert \left[ u\left( \ell \right) +1,\ n\right] \right\vert =n-u\left(
\ell \right) $, this rewrites as $n-u\left( \ell \right) >n-\ell $, whence $%
u(\ell )<\ell $. This proves part \textbf{(a)}. 
\medskip

\textbf{(b)} Let $i>\ell $. Then, 
\eqref{pf.lem.lastdes.1} yields $u(i)<u(i+1)<\cdots <u(n)$. Thus, the $n-i$
distinct numbers $u\left( i+1\right) ,\ u\left( i+2\right) ,\ \ldots ,\
u\left( n\right) $ are all larger than $u\left( i\right) $, hence belong to
the interval $\left[ u\left( i\right) +1,\ n\right] $. Therefore, this
interval must contain at least $n-i$ numbers. In other words, $\left\vert %
\left[ u\left( i\right) +1,\ n\right] \right\vert \geq n-i$. Since $%
\left\vert \left[ u\left( i\right) +1,\ n\right] \right\vert =n-u\left(
i\right) $, this rewrites as $n-u\left( i\right) \geq n-i$, whence $u(i)<i$.
This proves part \textbf{(b)}.
\end{proof}

\begin{lem}
\label{lem.LRM.1} Let $\leq_{\lex}$ denote the lexicographic order
relation on permutations seen as their one-line notation. Let $w,u\in S_{n}$
be such that $\Des (u)\subseteq \LRM^{\prime }(w)$. Then, $%
uw\;\leq_{\lex}\;w$ .
\end{lem}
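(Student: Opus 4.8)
The plan is to prove the inequality by induction on $n$; the base cases $n\le 1$ are vacuous, since then $u=\id$ and $uw=w$. For the inductive step I would begin by recording the simple fact that $w(1)$ is the \emph{largest} element of $\operatorname{LRM}(w)$: it is always a left-to-right minimum, and every other one occurs further to the right and is therefore smaller than $w(1)$. Write $\ell:=w(1)=\max\operatorname{LRM}(w)$. Because $k+1\notin\operatorname{LRM}(w)$ for every $k\in\{\ell,\ell+1,\dots,n-1\}$, such a $k$ is never in $\operatorname{LRM}'(w)$, so the hypothesis $\Des(u)\subseteq\operatorname{LRM}'(w)$ forces $u(\ell)<u(\ell+1)<\dots<u(n)$; these are $n-\ell+1$ distinct values bounded above by $n$, so $u(\ell)\le\ell$.

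Next I would split into two cases. If $u(\ell)<\ell$, then $(uw)(1)=u(w(1))=u(\ell)<\ell=w(1)$, so $uw<_{\mathrm{lex}}w$ already at the first position and we are done. If instead $u(\ell)=\ell$, then the increasing chain $u(\ell)<u(\ell+1)<\dots<u(n)$ starting at $\ell$ and bounded by $n$ must be $\ell,\ell+1,\dots,n$, so $u$ fixes $\{\ell,\ell+1,\dots,n\}$ pointwise and hence restricts to a permutation $u'\in S_{\ell-1}$ (if $\ell=1$ this already forces $u=\id$). I would then pass to the permutation $\widetilde w\in S_{\ell-1}$ read off from the entries of $w$ that are $\le\ell-1$, taken in the order in which they occur: if $p_1<\dots<p_{\ell-1}$ are the positions of $w$ carrying these small values, set $\widetilde w(a):=w(p_a)$.

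The substance of the argument is then to check that the inductive hypothesis applies to the pair $(\widetilde w,u')$ and to transfer its conclusion back. For the first point, since $w(1)=\ell$ exceeds every value $\le\ell-1$, a value $c\le\ell-1$ is a left-to-right minimum of $w$ if and only if it is one of $\widetilde w$ (the deleted entries, all $\ge\ell$, can never obstruct such a $c$); hence $\operatorname{LRM}(\widetilde w)=\operatorname{LRM}(w)\cap[\ell-1]$, whence $\operatorname{LRM}'(\widetilde w)=\operatorname{LRM}'(w)\cap[\ell-2]$, while $\Des(u')=\Des(u)\cap[\ell-2]$ because $u'$ agrees with $u$ on $[\ell-1]$. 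Thus $\Des(u')\subseteq\operatorname{LRM}'(\widetilde w)$, and the inductive hypothesis gives $u'\widetilde w\le_{\mathrm{lex}}\widetilde w$. For the transfer, $u$ fixes every value $\ge\ell$, so the one-line notations of $uw$ and $w$ coincide at every position outside $\{p_1,\dots,p_{\ell-1}\}$, whereas at position $p_a$ they read $(u'\widetilde w)(a)$ and $\widetilde w(a)$ respectively; therefore the first position at which $uw$ and $w$ differ is $p_{a_0}$, where $a_0$ is the first position at which $u'\widetilde w$ and $\widetilde w$ differ, and there $(uw)(p_{a_0})=(u'\widetilde w)(a_0)<\widetilde w(a_0)=w(p_{a_0})$. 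Hence $uw\le_{\mathrm{lex}}w$, which completes the induction.

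The only places calling for genuine care are the two claims in the previous paragraph — that passing to the subword of small values preserves the relevant left-to-right-minima data, and that the lexicographic comparison of $uw$ with $w$ is controlled entirely by the positions carrying those small values — but both are routine once the setup is in place. I do not anticipate a serious obstacle; the one non-obvious idea is the observation that $\Des(u)\subseteq\operatorname{LRM}'(w)$ forces $u$ to be increasing on $\{\ell,\ell+1,\dots,n\}$, which both disposes of the case $u(\ell)<\ell$ directly and, in the case $u(\ell)=\ell$, produces the smaller instance to which induction applies. (Tracking equality through the same induction also shows that $uw=w$ holds only for $u=\id$, which will be what makes the family $\bigl(\mathbf{B}_{\operatorname{LRM}'(w)}\,w\bigr)_{w\in S_n}$ unitriangular, though this refinement is not needed for the present statement.)
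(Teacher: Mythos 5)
Your proof is correct, but it takes a genuinely different route from the one in the paper. You proceed by induction on $n$: you observe that $\ell := w(1)$ is the largest element of $\operatorname{LRM}(w)$, deduce from $\Des(u)\subseteq\operatorname{LRM}'(w)$ that $u$ is increasing on $[\ell,n]$ and hence $u(\ell)\le\ell$, dispose of the case $u(\ell)<\ell$ at the first position of $w$, and in the remaining case reduce to $(\widetilde w,u')\in S_{\ell-1}\times S_{\ell-1}$ after verifying that the LRM and descent data restrict correctly. The paper instead gives a direct, non-inductive argument: assuming $u\neq\id$, it lets $\ell$ be the \emph{largest} element of $\operatorname{LRM}(w)$ with $u(\ell)<\ell$ (such an $\ell$ exists because the minimal $k$ with $u(k)<k$ yields $k\in\operatorname{LRM}(w)$), proves by a short contradiction argument that $u(i)\le i$ for all $i>\ell$, and then reads off the lexicographic drop at the position $w^{-1}(\ell)$, using that every entry to the left of $\ell$ in $w$ exceeds $\ell$. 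Both arguments hinge on the same constraint — that $\Des(u)\subseteq\operatorname{LRM}'(w)$ forces $u$ to be increasing between consecutive LRMs of $w$ — but the paper exploits it once at the right place, while you peel off the top block and recurse. The paper's version is shorter and avoids the bookkeeping of restricting $w$ and $u$; your version has the pedagogical merit of making the reduction to a smaller instance explicit and of tracking the equality case ($uw=w\iff u=\id$) transparently through the recursion. One minor point: your reduction goes from $n$ to $\ell-1$, which need not equal $n-1$, so you should phrase it as strong induction on $n$ (or induction on $\ell(w)=\max\operatorname{LRM}(w)$) rather than simple induction.
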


\begin{proof}
Assume that $u$ has at least one descent (since otherwise, we have $u\left(
1\right) \leq u\left( 2\right) \leq \cdots \leq u\left( n\right) $,
therefore $u=\id$ and thus $uw=w$, rendering the lemma obvious).
Let $m$ be the largest descent of $u$. Thus, $m\in \Des (u)$. Let $\ell
=m+1$. Then, Lemma \ref{lem.lastdes} \textbf{(a)} yields $u(\ell )<\ell $.
Furthermore, $\ell -1=m\in \Des (u)\subseteq \LRM^{\prime }(w)$,
hence $\ell \in \LRM(w)$. Therefore, every entry $i$ appearing to the
left of $\ell $ in the one-line notation of $w$ is $>\ell $, and thus
satisfies $u(i)\leq i$ by Lemma \ref{lem.lastdes} \textbf{(b)}. Combining
this with $u(\ell )<\ell $, we see that $uw<_{\lex}w$
lexicographically. This proves Lemma \ref{lem.LRM.1}.
\end{proof}

\begin{corollary}
\label{cor.LRM.triang}
For every $w\in S_n$, we have
\[
\mathbf{B}_{\operatorname{LRM}'(w)}\, w
= w + \Bigl(\text{a $\kk$-linear combination of lexicographically smaller permutations}\Bigr).
\]
\end{corollary}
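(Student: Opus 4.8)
The plan is to expand the product $\mathbf{B}_{\operatorname{LRM}'(w)}\, w$ directly from the definition of the $\mathbf{B}$-basis element and then apply Lemma~\ref{lem.LRM.1} term by term. Recall that $\mathbf{B}_I = \sum_{u \in S_n;\ \Des(u)\subseteq I} u$; hence, multiplying on the right by $w$,
\[
\mathbf{B}_{\operatorname{LRM}'(w)}\, w
  \;=\; \sum_{\substack{u\in S_n;\\ \Des(u)\subseteq \operatorname{LRM}'(w)}} uw .
\]
Since right multiplication by $w$ is a bijection of $S_n$, the permutations $uw$ occurring on the right are pairwise distinct, so there is no cancellation or merging of terms.

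Next I would single out the summand $u=\id$, which satisfies $\Des(\id)=\varnothing\subseteq\operatorname{LRM}'(w)$ and contributes exactly $w$. For every other $u$ in the index set we have $u\neq\id$ together with $\Des(u)\subseteq\operatorname{LRM}'(w)$, so Lemma~\ref{lem.LRM.1} applies and gives $uw\le_{\mathrm{lex}}w$. The point to be checked is that this inequality is in fact \emph{strict} when $u\neq\id$: this is exactly what the argument in the proof of Lemma~\ref{lem.LRM.1} delivers, because there the constructed element $\ell\in\operatorname{LRM}(w)$ satisfies $u(\ell)<\ell$ strictly while $u(i)\le i$ for all entries $i>\ell$ appearing to the left of $\ell$ in the one-line notation of $w$, forcing the first coordinate in which $uw$ and $w$ differ to be strictly smaller in $uw$. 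Thus $uw<_{\mathrm{lex}}w$ for each such $u$.

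Combining these two observations, $\mathbf{B}_{\operatorname{LRM}'(w)}\, w$ equals $w$ (the $u=\id$ contribution) plus a $\mathbf{k}$-linear combination of the distinct permutations $uw$ with $u\neq\id$ and $\Des(u)\subseteq\operatorname{LRM}'(w)$, each of which is lexicographically smaller than $w$. This is precisely the asserted identity.

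\textbf{Expected main obstacle.}
There is essentially no deep obstacle here: the corollary is a direct consequence of Lemma~\ref{lem.LRM.1}. The only point requiring a moment's care is the upgrade from the weak inequality $uw\le_{\mathrm{lex}}w$ in the statement of Lemma~\ref{lem.LRM.1} to the strict inequality $uw<_{\mathrm{lex}}w$ for $u\neq\id$; as noted above this is already contained in the proof of that lemma, so in practice I would either re-state Lemma~\ref{lem.LRM.1} with the strict conclusion for $u\neq\id$, or simply invoke the relevant line of its proof.
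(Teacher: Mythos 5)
Your proof is correct and follows essentially the same expansion-and-apply-Lemma~\ref{lem.LRM.1} approach as the paper. The one spot you flag as delicate --- upgrading $uw \le_{\mathrm{lex}} w$ to a strict inequality for $u\neq\id$ --- does not actually require reopening the proof of Lemma~\ref{lem.LRM.1}: since $\le_{\mathrm{lex}}$ is a total order and $uw\neq w$ for $u\neq\id$ by cancellation (a fact you already invoke to see the summands are distinct), $uw \le_{\mathrm{lex}} w$ immediately sharpens to $uw <_{\mathrm{lex}} w$.
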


\begin{proof}
Expand:
\[
\mathbf{B}_{\operatorname{LRM}'(w)}
 = \sum_{\substack{u\in S_n;\\\Des(u)\subseteq \operatorname{LRM}'(w)}} u,
\qquad
\mathbf{B}_{\operatorname{LRM}'(w)}\,w
 = \sum_{\substack{u\in S_n;\\\Des(u)\subseteq \operatorname{LRM}'(w)}} uw.
\]
By Lemma~\ref{lem.LRM.1}, every summand $uw$ is lexicographically smaller than $w$, and equality occurs only for $u=\mathrm{id}$.  Thus the claim follows.
\end{proof}

\begin{corollary}
\label{cor.LRM.basis}
The family
\[
\bigl\{
\mathbf{B}_{\operatorname{LRM}'(w)}\, w
\;\big|\; w\in S_n
\bigr\}
\]
is a basis of $\mathcal{A}$.
\end{corollary}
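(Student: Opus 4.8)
The plan is to read the result off from the triangularity in Corollary~\ref{cor.LRM.triang}, which already does all the work. Since $S_n$ is finite, I would list its elements as $w_1, w_2, \ldots, w_N$ (with $N = n!$) in increasing lexicographic order, i.e. $w_1 <_{\mathrm{lex}} w_2 <_{\mathrm{lex}} \cdots <_{\mathrm{lex}} w_N$; this is legitimate because $\le_{\mathrm{lex}}$ is a total order on one-line notations. For each $j$, I would expand $\mathbf{B}_{\operatorname{LRM}'(w_j)}\, w_j$ in the standard basis $(w)_{w \in S_n}$ of $\mathcal{A}$ and let $T = (T_{i,j})$ be the resulting $N \times N$ matrix over $\mathbf{k}$, so that $\mathbf{B}_{\operatorname{LRM}'(w_j)}\, w_j = \sum_{i=1}^{N} T_{i,j}\, w_i$ for every $j$.

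Then I would invoke Corollary~\ref{cor.LRM.triang}: it says precisely that $\mathbf{B}_{\operatorname{LRM}'(w_j)}\, w_j = w_j + (\text{a } \mathbf{k}\text{-linear combination of } w_i \text{ with } i < j)$. Hence $T$ is lower-triangular with every diagonal entry $T_{j,j}$ equal to $1$; in particular $\det T = 1$, so $T$ is invertible over $\mathbf{k}$ (a unitriangular matrix over an arbitrary ring can be inverted by back-substitution, so no hypothesis on $\mathbf{k}$ is needed here).

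Finally, $T$ is by construction the change-of-basis matrix expressing the family $(\mathbf{B}_{\operatorname{LRM}'(w)}\, w)_{w \in S_n}$ in terms of the $\mathbf{k}$-basis $(w)_{w \in S_n}$ of the free $\mathbf{k}$-module $\mathcal{A}$. An invertible change-of-basis matrix carries a basis to a basis, so $(\mathbf{B}_{\operatorname{LRM}'(w)}\, w)_{w \in S_n}$ is a basis of $\mathcal{A}$, as claimed. There is no genuine obstacle in this last step: all the substance lies in Lemma~\ref{lem.LRM.1} and Corollary~\ref{cor.LRM.triang}, which we may assume; the only things to verify at this point are the bookkeeping facts that $\le_{\mathrm{lex}}$ is a total order and that $S_n$ is finite, so that $T$ is a well-defined square triangular matrix with unit diagonal.
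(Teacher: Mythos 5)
Your argument is precisely the paper's proof, merely spelled out in full: the paper says exactly that Corollary~\ref{cor.LRM.triang} gives a unitriangular expansion in the standard basis $\{w\}_{w\in S_n}$, and you have made the unitriangularity, the invertibility over an arbitrary $\mathbf{k}$, and the change-of-basis conclusion explicit. Correct and identical in approach.
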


\begin{proof}
Corollary~\ref{cor.LRM.triang} shows that this family expands unitriangularly in the standard basis $\{w\}_{w\in S_n}$.
\end{proof}

\begin{example}
Let us write permutations $\sigma\in S_{n}$ in one-line notation, i.e., as
$\left[  \sigma\left(  1\right)  \ \sigma\left(  2\right)  \ \cdots
\ \sigma\left(  n\right)  \right]  $. For $n=3$, the basis $\bigl\{\mathbf{B}%
_{\operatorname{LRM}^{\prime}(w)}\,w\;\big|\;w\in S_{n}\bigr\}$ of
$\mathcal{A}$ consists of the vectors%
\begin{align*}
\mathbf{B}_{\operatorname{LRM}^{\prime}(\left[  123\right]  )}\,\left[
123\right]    & =\mathbf{B}_{\left(  3\right)  }\ \left[  123\right]  =\left[
123\right]  ;\\
\mathbf{B}_{\operatorname{LRM}^{\prime}(\left[  132\right]  )}\,\left[
132\right]    & =\mathbf{B}_{\left(  3\right)  }\ \left[  132\right]  =\left[
132\right]  ;\\
\mathbf{B}_{\operatorname{LRM}^{\prime}(\left[  213\right]  )}\,\left[
213\right]    & =\mathbf{B}_{\left(  1,2\right)  }\ \left[  213\right]
=\left[  213\right]  +\left[  123\right]  +\left[  132\right]  ;\\
\mathbf{B}_{\operatorname{LRM}^{\prime}(\left[  231\right]  )}\,\left[
231\right]    & =\mathbf{B}_{\left(  1,2\right)  }\ \left[  231\right]
=\left[  231\right]  +\left[  132\right]  +\left[  123\right]  ;\\
\mathbf{B}_{\operatorname{LRM}^{\prime}(\left[  312\right]  )}\,\left[
312\right]    & =\mathbf{B}_{\left(  2,1\right)  }\ \left[  312\right]
=\left[  312\right]  +\left[  213\right]  +\left[  123\right]  ;\\
\mathbf{B}_{\operatorname{LRM}^{\prime}(\left[  321\right]  )}\,\left[
321\right]    & =\mathbf{B}_{\left(  1,1,1\right)  }\ \left[  321\right]
= \ive{321} + \ive{312} + \ive{231} + \ive{213} + \ive{132} + \ive{123}.
\end{align*}
\end{example}

\subsection{Connection with the $\mathbf{B}$-basis filtration}
\label{sec.rs}
We now relate this new basis to the filtration introduced in Section~\ref{sec.filtration}.  Recall the filtration subspaces $\mathcal{R}_\alpha$ indexed by compositions.

\begin{theorem}
\label{thm.LRM.filtbasis}
Let $\alpha\in\Comp_n$ and define
\[
\mathcal{S}_\alpha
 := \operatorname{span}\bigl\{
\mathbf{B}_{\operatorname{LRM}'(w)}\, w
 \;\big|\;
\widetilde{\operatorname{cLRM}'(w)}\preceq_\pi \widetilde{\alpha}
\bigr\}.
\]
Then
\[
\mathcal{S}_\alpha = \mathcal{R}_\alpha.
\]
\end{theorem}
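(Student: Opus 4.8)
The plan is to prove the two inclusions $\mathcal{S}_\alpha \subseteq \mathcal{R}_\alpha$ and $\mathcal{R}_\alpha \subseteq \mathcal{S}_\alpha$ separately. For the first inclusion, I would take a generating vector $\mathbf{B}_{\operatorname{LRM}'(w)}\, w$ with $\widetilde{\operatorname{cLRM}'(w)} \preceq_\pi \widetilde{\alpha}$ and observe that, since $\operatorname{Set}(\operatorname{cLRM}'(w)) = \operatorname{LRM}'(w)$, the element $\mathbf{B}_{\operatorname{LRM}'(w)}$ equals $\mathbf{B}_{\operatorname{cLRM}'(w)}$ in the reindexed notation. By Proposition~\ref{cor.Rbeta.subpar}, the hypothesis $\widetilde{\operatorname{cLRM}'(w)} \preceq_\pi \widetilde{\alpha}$ gives $\mathcal{R}_{\operatorname{cLRM}'(w)} \subseteq \mathcal{R}_\alpha$, and since $\mathbf{B}_{\operatorname{LRM}'(w)}\, w \in \mathbf{B}_{\operatorname{cLRM}'(w)}\,\mathcal{A} = \mathcal{R}_{\operatorname{cLRM}'(w)}$, we conclude $\mathbf{B}_{\operatorname{LRM}'(w)}\, w \in \mathcal{R}_\alpha$. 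Hence $\mathcal{S}_\alpha \subseteq \mathcal{R}_\alpha$.

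For the reverse inclusion $\mathcal{R}_\alpha \subseteq \mathcal{S}_\alpha$, the idea is a dimension/triangularity count. By Corollary~\ref{cor.LRM.basis}, the family $\{\mathbf{B}_{\operatorname{LRM}'(w)}\, w\}_{w\in S_n}$ is a basis of $\mathcal{A}$, and by Corollary~\ref{cor.LRM.triang} it expands unitriangularly (with respect to $\le_{\mathrm{lex}}$) in the standard basis $\{w\}_{w\in S_n}$. I would like to show that $\mathcal{R}_\alpha$ is spanned by a subfamily of this LRM-basis, namely exactly the one indexed by those $w$ with $\widetilde{\operatorname{cLRM}'(w)} \preceq_\pi \widetilde{\alpha}$. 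The first inclusion already shows that all these basis vectors lie in $\mathcal{R}_\alpha$, so it remains to check that $\dim \mathcal{R}_\alpha$ equals the number of such $w$ — or, more robustly, that $\mathcal{R}_\alpha$ is contained in their span. For the latter, I would argue as follows: $\mathcal{R}_\alpha = \mathbf{B}_\alpha \mathcal{A}$ is spanned by the elements $\mathbf{B}_\alpha v$ for $v \in S_n$; expanding $\mathbf{B}_\alpha v = \sum_{u : \Des(u)\subseteq \operatorname{Set}(\alpha)} uv$, these are sums of permutations, so it suffices to rewrite each permutation in $\mathcal{R}_\alpha$ in terms of the LRM-basis and check that only the allowed indices appear.

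The cleanest route for the containment is to show that $\mathcal{R}_\alpha$ is spanned by LRM-basis vectors: since the LRM-basis is unitriangular over the lex order, a subspace that is spanned by some subset of standard basis vectors closed under "lex-down" passes to being spanned by the corresponding LRM-basis vectors — but $\mathcal{R}_\alpha$ is not of that form, so instead I would use the following characterization. A permutation $w$ with $\Des(w)\subseteq \operatorname{LRM}'(w)$ — wait, more to the point: I claim $\mathbf{B}_\alpha\,\mathcal{A}$ has a spanning set consisting of elements $\mathbf{B}_\alpha v$, and one shows via Solomon's Mackey formula or a direct double-coset argument that $\mathbf{B}_\alpha v$ is, up to the $\BB$-relations, supported on permutations $w$ with $\operatorname{LRM}'(w)$ refining $\operatorname{Set}(\alpha)$ after reordering, i.e. $\widetilde{\operatorname{cLRM}'(w)}\preceq_\pi\widetilde{\alpha}$. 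Concretely: every element of $\mathcal{R}_\alpha$ can be written using Lemma~\ref{prop.Rbeta.sym} and the coset structure so that each permutation $w$ appearing with nonzero coefficient has the property that its "$\BB$-leading term" $\mathbf{B}_{\operatorname{LRM}'(w)}\,w$ also lies in $\mathcal{R}_\alpha$ — giving $\mathcal{R}_\alpha \subseteq \mathcal{S}_\alpha$ by downward induction on lex order, peeling off leading terms.

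\textbf{The main obstacle.} The genuinely hard part is the reverse inclusion $\mathcal{R}_\alpha \subseteq \mathcal{S}_\alpha$: establishing that $\mathcal{R}_\alpha$, a priori only defined as a right-ideal generated by one element, is spanned by precisely the LRM-basis vectors indexed by $\{w : \widetilde{\operatorname{cLRM}'(w)}\preceq_\pi\widetilde{\alpha}\}$. This amounts to a dimension equality $\dim \mathcal{R}_\alpha = \#\{w \in S_n : \widetilde{\operatorname{cLRM}'(w)}\preceq_\pi\widetilde{\alpha}\}$, which presumably requires either an explicit description of a basis of $\mathcal{R}_\alpha$ (e.g. via the double cosets of Young subgroups, since $\mathbf{B}_\alpha\,\mathcal{A}$ is the span of the permutations $w$ that are minimal-length or that are "$\alpha$-sorted on blocks" composed with arbitrary $v$), or a more delicate argument tracking how the leading term $w$ of $\mathbf{B}_{\operatorname{LRM}'(w)}\,w$ behaves under membership in $\mathcal{R}_\alpha$. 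I expect the counting side to hinge on the combinatorial identity that the number of permutations $w \in S_n$ with $\operatorname{cLRM}'(w)$ equal to a fixed composition $\gamma$ of length $k$ is $(n-k)! \cdot \binom{n - \gamma_1 - \cdots}{\cdots}$-type product — more precisely, $\prod_i (\gamma_i - 1)!$-free but involving the positions of the minima — and matching this against $\dim \mathbf{B}_\alpha\mathcal{A} = \sum$ over appropriate double cosets. Getting this bookkeeping to line up exactly with $\preceq_\pi$, independently of the ground ring $\kk$, is where the real work lies.
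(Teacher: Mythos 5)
Your first inclusion $\mathcal{S}_\alpha \subseteq \mathcal{R}_\alpha$ is correct and matches the paper. But for the reverse inclusion $\mathcal{R}_\alpha \subseteq \mathcal{S}_\alpha$, you have correctly identified the obstacle without finding a way around it: none of the three routes you sketch (dimension count, peeling leading terms by lex-downward induction, double-coset bookkeeping) is actually workable as stated.

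The dimension count fails because the theorem is claimed over an arbitrary commutative ring $\mathbf{k}$ (as you yourself note at the end), where $\dim$ does not even make sense; and even over a field it would give you only an \emph{equality of dimensions}, not the containment itself, unless you had already shown one space contained in the other. The lex-peeling idea is closer in spirit to what is needed --- the paper does proceed by an induction and does exploit the unitriangularity of the LRM-basis --- but without a concrete mechanism that forces $\lambda_w = 0$ when $\widetilde{\operatorname{cLRM}'(w)} \npreceq_\pi \widetilde{\alpha}$, the induction has nothing to turn. And your $\mathbf{B}$-relation/double-coset argument is too vague to evaluate: writing $\mathbf{B}_\alpha v$ as a sum of permutations does not in any obvious way control which LRM-basis indices occur when you re-expand.

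The ingredient you are missing is a separate \emph{dual} family of elements whose left action on $\mathcal{R}_\alpha$ can detect the offending coefficients. The paper's proof introduces, inside the free algebra $F_n$, the products of Dynkin elements (nested commutators)
\[
\mathbf{V}_\beta = V^{\operatorname{Set}(\beta)_1} V^{\operatorname{Set}(\beta)_2} \cdots V^{\operatorname{Set}(\beta)_p},
\]
and the key vanishing criterion (a consequence of Reutenauer's Lemma~9.33) that $\mathbf{V}_\beta \mathbf{B}_\gamma = 0$ whenever $\widetilde{\beta} \npreceq_\pi \widetilde{\gamma}$. Taking $\beta := \operatorname{cLRM}'(w)$ for a putative offending $w$ and applying $\mathbf{V}_\beta$ to the expansion $b = \sum_u \lambda_u \mathbf{B}_{\operatorname{cLRM}'(u)} u$ kills $\mathbf{V}_\beta \mathbf{B}_\alpha = 0$ on the left side, while on the right a careful analysis (Lemmas~\ref{lem.wuv} and \ref{lem.wu2}, which rest on properties of V-shaped words and LRMs) shows that the $\underline{w}$-coefficient isolates exactly $\lambda_w$, forcing $\lambda_w = 0$. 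This is the content of the induction on the partition-refinement order; without something playing the role of the $\mathbf{V}_\beta$'s, the argument does not close.

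So: correct setup, correct identification of where the work lies, but the actual engine of the proof --- the free-algebra/Dynkin element machinery --- is absent, and the substitutes you propose are either inapplicable (dimension count over general $\mathbf{k}$) or incomplete (lex-peeling with no vanishing mechanism).
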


The inclusion
$\mathcal{S}_\alpha \subseteq \mathcal{R}_\alpha$
in Theorem \ref{thm.LRM.filtbasis}
follows directly from Proposition~\ref{cor.Rbeta.subpar}.
The converse inclusion
$\mathcal{R}_\alpha \subseteq \mathcal{S}_\alpha$
is much harder, and will be proved in \S \ref{sec.dynkin}.

\begin{remark}
The use of $\preceq_\pi$ (rather than $\preceq$)
is essential.  
The seemingly more natural span
\(
\mathcal{S}'_\alpha
 :=
 \operatorname{span}\bigl\{
\mathbf{B}_{\operatorname{LRM}'(w)}\, w
\;\big|\;
\operatorname{cLRM}'(w)\preceq \alpha
\bigr\}
\)
is often smaller than $\mathcal{R}_\alpha$.
For instance,
\[
\text{for $n = 3$:} \qquad
\begin{tabular}{|c||c|c|c|c|}
\hline
$\alpha$ & $(1,1,1)$ & $(1,2)$ & $(2,1)$ & $(3)$ \\
\hline
$\dim\bigl(\mathcal{R}_\alpha\bigr)$ & $1$ & $4$ & $4$ & $6$\\
\hline
$\dim\bigl(\mathcal{S}'_\alpha\bigr)$ & $1$ & $3$ & $2$ & $6$\\
\hline
\end{tabular}
\]
\end{remark}

Combining Theorem~\ref{thm.Rbeta.act-on-quot} with
Theorem~\ref{thm.LRM.filtbasis}, we obtain:

\begin{corollary}
\label{cor.evals}
(This is an outline; see Section \ref{sec.evals} for a more
detailed statement.)

Let $\mathbf{a} = \sum_{\alpha \in \Comp_n} \lambda_\alpha \BB_\alpha$
(with $\lambda_\alpha \in \kk$) be an arbitrary element of the
descent algebra $\Sigma_n$.
Then, left multiplication by $\mathbf{a}$ on $\mathcal{A}$
(that is, the linear map $\mathcal{A}\to \mathcal{A},\ \mathbf{x}
\mapsto \mathbf{a}\mathbf{x}$) is represented with respect to the
basis from Corollary~\ref{cor.LRM.basis} by a triangular matrix
(triangular with respect to an appropriate order on $\Comp_n$).
The diagonal entries of this matrix (and thus the eigenvalues
of this map, listed with their algebraic multiplicities) are the
numbers
$\sum_{\alpha \in \Comp_n} \lambda_\alpha \eta_{\cLRM'(w)}\tup{\alpha}$,
where $w$ ranges over $S_n$, and where we use the
$\eta_\beta\tup{\alpha}$ defined in
Theorem~\ref{thm.Rbeta.act-on-quot}.
\end{corollary}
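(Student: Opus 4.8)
The plan is to deduce Corollary~\ref{cor.evals} by combining the two main structural results already in hand: Theorem~\ref{thm.LRM.filtbasis}, which identifies the span of those basis vectors $\BB_{\LRM'(w)}\,w$ with $\widetilde{\cLRM'(w)} \preceq_\pi \widetilde{\alpha}$ as the right ideal $\mathcal{R}_\alpha$, and Theorem~\ref{thm.Rbeta.act-on-quot}, which says that left multiplication by $\BB_\alpha$ acts on the subquotient $\mathcal{R}_\beta / \sum_{\beta' \prec \beta} \mathcal{R}_{\beta'}$ as the scalar $\eta_\beta(\alpha)$. The strategy is thus: first set up the right order on $S_n$ (equivalently, a refinement order on the $\cLRM'$-partitions of the index permutations), then check triangularity of left multiplication by an arbitrary $\BB_\alpha$ on the LRM-basis with respect to this order, and finally read off the diagonal entry attached to each basis vector $\BB_{\LRM'(w)}\,w$ as the scalar $\eta_{\cLRM'(w)}(\alpha)$.

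Concretely, I would first fix any total order $\sqsubseteq$ on $\Comp_n$ that refines the partial order given by $\alpha \sqsubset \beta$ whenever $\widetilde{\alpha} \prec_\pi \widetilde{\beta}$ (for instance, order by $|\widetilde{\cdot}|$ lexicographically, breaking ties arbitrarily), and then order the LRM-basis vectors $\BB_{\LRM'(w)}\,w$ by applying $\sqsubseteq$ to the compositions $\cLRM'(w)$. For a fixed $w \in S_n$, Theorem~\ref{thm.LRM.filtbasis} with $\alpha := \cLRM'(w)$ gives $\BB_{\LRM'(w)}\,w \in \mathcal{R}_{\cLRM'(w)}$, and more precisely it lies in $\mathcal{R}_{\cLRM'(w)}$ which, modulo $\sum_{\gamma \prec_\pi \cLRM'(w)} \mathcal{R}_\gamma$, is spanned by the classes of those basis vectors $\BB_{\LRM'(u)}\,u$ with $\widetilde{\cLRM'(u)} = \widetilde{\cLRM'(w)}$. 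Now I apply $\BB_\alpha$ on the left: by Proposition~\ref{prop.Rbeta.subbi} the ideal $\mathcal{R}_{\cLRM'(w)}$ is stable, and by Theorem~\ref{thm.Rbeta.act-on-quot} the map induced on the subquotient $\mathcal{R}_{\cLRM'(w)} / \sum_{\gamma \prec_\pi \cLRM'(w)} \mathcal{R}_\gamma$ is multiplication by $\eta_{\cLRM'(w)}(\alpha)$. Hence
\[
\BB_\alpha\,\BB_{\LRM'(w)}\,w
 = \eta_{\cLRM'(w)}(\alpha)\,\BB_{\LRM'(w)}\,w
 \;+\; \bigl(\text{terms in } \textstyle\sum_{\gamma \prec_\pi \cLRM'(w)} \mathcal{R}_\gamma\bigr),
\]
and by Theorem~\ref{thm.LRM.filtbasis} the error term expands in the LRM-basis vectors $\BB_{\LRM'(u)}\,u$ with $\widetilde{\cLRM'(u)} \prec_\pi \widetilde{\cLRM'(w)}$, i.e.\ strictly $\sqsubset$-earlier. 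Together with the fact that $\BB_{\LRM'(w)}\,w$ itself appears on the diagonal only from the $\BB_\beta$ with $\widetilde\beta = \widetilde{\cLRM'(w)}$ (again by the two theorems), this establishes triangularity of left multiplication by $\BB_\alpha$ in this basis, with diagonal entry $\eta_{\cLRM'(w)}(\alpha)$ in the row/column indexed by $w$. Summing over $\alpha$ with coefficients $\lambda_\alpha$ gives left multiplication by $\mathbf{a}$ as a triangular matrix with diagonal entries $\sum_{\alpha \in \Comp_n} \lambda_\alpha\, \eta_{\cLRM'(w)}(\alpha)$, and the eigenvalue claim follows since the eigenvalues of a triangular matrix (with algebraic multiplicities) are exactly its diagonal entries.

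The one point that needs genuine care — and which I expect to be the main obstacle — is pinning down precisely the partial order on the LRM-basis making the matrix triangular, and verifying that the "error term" really is supported on strictly-earlier basis vectors rather than merely on basis vectors with $\cLRM'$-composition refining $\cLRM'(w)$ in the weaker composition sense $\preceq$. This is exactly the subtlety flagged in the Remark following Theorem~\ref{thm.LRM.filtbasis}: one must work with $\preceq_\pi$ on underlying partitions throughout, using Lemma~\ref{prop.Rbeta.sym} to collapse anagram redundancies, so that $\mathcal{R}_{\cLRM'(w)}$ and the spanning description $\mathcal{S}_\alpha$ match up cleanly. Once the bookkeeping is done via the map $w \mapsto \widetilde{\cLRM'(w)}$ and the order $\sqsubseteq$, the rest is a direct transcription of Theorems~\ref{thm.Rbeta.act-on-quot} and~\ref{thm.LRM.filtbasis}, with no further computation required.
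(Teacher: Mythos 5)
Your proof is correct and follows exactly the route the paper intends: it makes precise the one-line derivation ``Combining Theorem~\ref{thm.Rbeta.act-on-quot} with Theorem~\ref{thm.LRM.filtbasis}, we obtain'' that prefaces the corollary. The two essential points you identify --- (i) Theorem~\ref{thm.Rbeta.act-on-quot} gives the diagonal coefficient $\eta_{\cLRM'(w)}(\alpha)$ on the subquotient $\mathcal{R}_{\cLRM'(w)}/\sum_{\gamma\prec\cLRM'(w)}\mathcal{R}_\gamma$, and (ii) Theorem~\ref{thm.LRM.filtbasis} lets you re-expand the error term in LRM-basis vectors $\BB_{\LRM'(u)}u$ with $\widetilde{\cLRM'(u)}\prec_\pi\widetilde{\cLRM'(w)}$, which are strictly earlier in any linear extension of $\prec_\pi$ pulled back along $w\mapsto\widetilde{\cLRM'(w)}$ --- are the correct ones, and the bookkeeping via Lemma~\ref{prop.Rbeta.sym} to pass between the composition-refinement sum in Theorem~\ref{thm.Rbeta.act-on-quot} and the partition-refinement sum matches what the paper says near its introduction of the quotients. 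Two tiny blemishes, neither affecting correctness: the parenthetical ``order by $|\widetilde{\cdot}|$ lexicographically'' is not meaningful since $|\widetilde{\alpha}|=n$ for every $\alpha\in\Comp_n$ (you presumably meant the number of parts, or simply any linear extension of $\prec_\pi$); and the sentence ``Together with the fact that $\BB_{\LRM'(w)}w$ itself appears on the diagonal only from the $\BB_\beta$ with $\widetilde\beta=\widetilde{\cLRM'(w)}$'' is confusingly worded --- what you actually need, and what you have already established, is that the expansion of $\BB_\alpha\BB_{\LRM'(w)}w$ contains no $\BB_{\LRM'(u)}u$ with $u\neq w$ and $\widetilde{\cLRM'(u)}=\widetilde{\cLRM'(w)}$.
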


The mere \textbf{existence} of such a basis when $\kk$ is a field
is not new, and is implicit in the works of Bidigare, Brown and
others. The formula for the eigenvalues can
be pieced together from \cite[Proposition 4.2]{RSW2014} and
\cite[Corollary 4.1.3]{Bidigare-thesis} and Foata's fundamental
transformation. Our innovation is
constructing such a basis explicitly, for all commutative rings $\kk$;
this yields a wholly new proof of Corollary~\ref{cor.evals}.

\section{The free algebra and the Dynkin elements}
\label{sec.dynkin}

\subsection{Definitions}


We now introduce the main ingredients in the proof of
Theorem~\ref{thm.LRM.filtbasis}.

Let $W_{n}$ be the monoid of all \emph{words} (i.e., finite tuples) with
letters (i.e., entries) in $\left[  n\right]  $; its multiplication is
concatenation. We denote its generators -- i.e., the letters -- by $\underline{1}%
,\underline{2},\ldots,\underline{n}$, so as to distinguish them from the
respective numbers. The monoid algebra $F_n := \kk \left[  W_{n}\right]  $ of
$W_{n}$ is the ring of \emph{noncommutative polynomials} (aka the \emph{free
algebra}) in $n$ indeterminates
$\underline{1},\underline{2},\ldots,\underline{n}$ over $\kk$.
It is graded by degree (i.e., word length),
and we let $F_{n,k}$ denote its $k$-th graded component, with basis given by
the set $W_{n,k}$ of length-$k$ words. The symmetric group $S_{k}$ acts on
$W_{n,k}$ from the right by permuting the letters:%
\[
\left(  \underline{w_{1}}\ \underline{w_{2}}\ \cdots\ \underline{w_{k}%
}\right)  \cdot\sigma=\underline{w_{\sigma\left(  1\right)  }}%
\ \underline{w_{\sigma\left(  2\right)  }}\ \cdots\ \underline{w_{\sigma
\left(  k\right)  }}\qquad\text{for }w_{i}\in\left[  n\right]  \text{ and
}\sigma\in S_{k}.
\]
Extending this action linearly, we obtain a right $\kk \left[
S_{k}\right]  $-action on $\kk \left[  W_{n,k}\right] = F_{n,k}$.
Two words in
$W_{n,k}$ are said to be \emph{anagrams} (of each other) if they belong to the
same $S_{k}$-orbit.

There is an injective map $S_{n}\rightarrow W_{n,n}$, sending each permutation
$\sigma\in S_{n}$ to its one-line notation $\underline{\sigma}%
:=\underline{\sigma\left(  1\right)  }\ \underline{\sigma\left(  2\right)
}\ \cdots\ \underline{\sigma\left(  n\right)  }$. By linearity, it extends to
an injective $\kk$-linear map $\kk \left[  S_{n}\right]
\rightarrow \kk \left[  W_{n,n}\right]  = F_{n,n}$, which we also write as
$a\mapsto\underline{a}$. This map is not a ring morphism, but it respects the
right $\kk \left[  S_{n}\right]  $-actions, i.e., satisfies%
\begin{align}
\underline{a}\cdot b=\underline{ab}\qquad\text{for each }a,b\in
\kk \left[  S_{n}\right]  \text{.}
\label{eq.OLN-equivariant}
\end{align}

A word $w=\left(  w_{1},w_{2},\ldots,w_{k}\right)  $ is said to have
\emph{V-shape} if it satisfies
\[
w_{1}>w_{2}>\cdots>w_{i}<w_{i+1}<w_{i+2}<\cdots<w_k
\qquad \text{ for some } i \in \ive{k}
\]
(that is, it first decreases, then increases). In this case, the $i$ is unique
and is denoted $\operatorname*{val}w$. A permutation $v\in S_n$ is said to
have \emph{V-shape} on an interval $[i, j] =
\left\{  i,i+1,\ldots,j\right\}  $ if the
word $\tup{ v\tup{i}, v\tup{i+1}, \ldots, v\tup{j} }$ has V-shape.

When $a,b\in F_{n}$, we write $\left[  a,b\right]  $ for the \emph{commutator}
$ab-ba$.

The crucial notion to us is the following:

\begin{defn}
\label{def.dynkin}For any nonempty subset $S$ of $\left[  n\right]  $, we
define the \emph{Dynkin element} (or \emph{nested commutator})
\[
V^{S}:=\left[  \left[  \cdots\left[  \left[  \underline{s_{1}}%
,\ \underline{s_{2}}\right]  ,\ \underline{s_{3}}\right]  ,\ \ldots\right]
,\ \underline{s_{k}}\right]  ,
\]
where $s_{1},s_{2},\ldots,s_{k}$ are the elements of $S$ in increasing order.
(If $k=1$, then this means just $\underline{s_{1}}$.)
\end{defn}

\begin{example}
We have
\begin{align*}
V^{\left\{  2,6,7\right\}  }  & =\left[  \left[  \underline{2},\ \underline{6}%
\right]  ,\ \underline{7}\right]  =\underline{2}\ \underline{6}\ \underline{7}%
-\underline{6}\ \underline{2}\ \underline{7}-\underline{7}\ \underline{2}%
\ \underline{6}+\underline{7}\ \underline{6}\ \underline{2};\\
V^{\left\{  1,4\right\}  }  & =\left[  \underline{1},\ \underline{4}\right]
=\underline{1}\ \underline{4}-\underline{4}\ \underline{1}%
;\ \ \ \ \ \ \ \ \ \ V^{\left\{  3\right\}  }=\underline{3}.
\end{align*}

\end{example}

The letter $V$ is chosen because of the following simple formula:

\begin{proposition}
\label{prop.dynkin.Vsh}
Let $S$ be a nonempty subset of $\left[  n\right]  $.
Then,
\[
V^{S}=\sum\left(  -1\right)  ^{\operatorname*{val}w-1}w,
\]
where the sum ranges over all words $w$ that contain each element of $S$ exactly
once (and no other letters) and have V-shape.
\end{proposition}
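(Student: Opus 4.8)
The plan is to prove the formula $V^S = \sum (-1)^{\operatorname{val} w - 1} w$ by induction on $k = |S|$, peeling off the last letter $\underline{s_k}$ through the outermost commutator. The base case $k = 1$ is immediate: $V^{\{s_1\}} = \underline{s_1}$, and the unique word on the letter set $\{s_1\}$ is $\underline{s_1}$ itself, which trivially has V-shape with $\operatorname{val} = 1$, so the sign is $(-1)^0 = 1$.

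For the inductive step, write $S = \{s_1 < s_2 < \cdots < s_k\}$ and set $T = \{s_1, \ldots, s_{k-1}\}$, so that $V^S = [V^T, \underline{s_k}] = V^T \underline{s_k} - \underline{s_k} V^T$. By the inductive hypothesis, $V^T = \sum_v (-1)^{\operatorname{val} v - 1} v$, where $v$ ranges over V-shaped words on the letter set $T$. I would then analyze the two terms separately. In $V^T \underline{s_k}$, each word becomes $v \underline{s_k}$; since $s_k$ is the largest element of $S$, appending it to the right of a V-shaped word $v$ on $T$ preserves V-shape (the final ascending run is simply extended by one), and $\operatorname{val}(v\underline{s_k}) = \operatorname{val}(v)$, so these words contribute exactly the terms of the target sum in which $s_k$ is the \emph{last} letter, with the correct signs. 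In $\underline{s_k} V^T$, each word becomes $\underline{s_k} v$; prepending the maximum $s_k$ to the left of a V-shaped word $v$ on $T$ again yields a V-shaped word (the initial descending run is extended by one at the front), with $\operatorname{val}(\underline{s_k} v) = \operatorname{val}(v) + 1$, hence a sign flip; combined with the overall minus sign in front of $\underline{s_k} V^T$, these contribute exactly the terms of the target sum in which $s_k$ is the \emph{first} letter, again with the correct signs.

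The remaining point is to check that every V-shaped word $w$ on the letter set $S$ has $s_k$ either in first or last position, and that the two families just described are disjoint and exhaust all such words. Indeed, if $w$ is V-shaped with valley index $i = \operatorname{val} w$, then $w_1 > w_2 > \cdots > w_i$ and $w_i < w_{i+1} < \cdots < w_k$; the maximum letter $s_k$ must lie in a position that dominates its neighbors, and along a strictly decreasing-then-increasing sequence the only positions that are local maxima are the two endpoints $1$ and $k$ (a position strictly inside the decreasing run has a larger left neighbor; one strictly inside the increasing run has a larger right neighbor; position $i$ is a strict local minimum unless $i \in \{1, k\}$). So $w_1 = s_k$ or $w_k = s_k$, and these cases overlap only when $k = 1$, already handled. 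Deleting $s_k$ from such a $w$ visibly yields a V-shaped word on $T$, and this deletion is the inverse of the append/prepend operations above, giving the desired bijection. Matching signs as computed then shows $V^S = V^T\underline{s_k} - \underline{s_k}V^T$ equals the claimed sum.

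The main obstacle is purely bookkeeping: one must be careful that the notion ``V-shape'' as defined requires a genuine valley index $i \in [k]$ with the stated strict inequalities, and verify that appending or prepending the global maximum never destroys strictness and shifts the valley index in exactly the way claimed (it does not shift when appending, and shifts by $+1$ when prepending). Once these elementary monotonicity observations are in place, the sign arithmetic $(-1)^{\operatorname{val} v - 1}$ versus $-(-1)^{\operatorname{val}(\underline{s_k}v) - 1} = -(-1)^{\operatorname{val} v} = (-1)^{\operatorname{val} v - 1}$ closes the induction with no further difficulty.
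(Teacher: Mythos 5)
Your proof is correct and uses exactly the approach the paper indicates: induction on $\lvert S\rvert$ via the recursion $V^{S}=\bigl[V^{S\setminus\{\max S\}},\,\underline{\max S}\,\bigr]$, observing that the maximum letter of a V-shaped word must sit at one of the two endpoints, so the two commutator terms account for the two cases. The paper states this in one sentence; you have simply filled in the bookkeeping (disjointness and exhaustiveness of the two families, the valley-index shift, and the sign check), all of which is sound.
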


\begin{proof}
Easy induction on $\left\vert S\right\vert $, since $V^{S}=\left[
V^{S\setminus\left\{  \max S\right\}  },\ \underline{\max S}\right]  $ for
$\left\vert S\right\vert >1$.
\end{proof}

Dynkin elements are fundamental to the study of the free Lie algebra; see,
e.g., \cite[(8.4.2)]{Reutenauer1993}. We shall have more use for their
products in $F_n$.

\begin{defn}
\label{defn.valpha} Let $\alpha=(\alpha_{1},\alpha_2,\dots,\alpha_{p})\in
\operatorname{Comp}(n)$ be a composition of $n$. Then, we set
\[
\mathbf{V}_{\alpha}=V^{\operatorname{Set}(\alpha)_{1}}V^{\operatorname{Set}%
(\alpha)_{2}}\cdots V^{\operatorname{Set}(\alpha)_{p}}.
\]

\end{defn}

\begin{example}
For the compositions $(2,1)$ and $(2,2)$ we obtain:
\begin{align*}
\mathbf{V}_{(2,1)}
&=V^{\left\{  1,2\right\}  }V^{\left\{  3\right\}  }=\left(
\underline{1}\ \underline{2}-\underline{2}\ \underline{1}\right)
\ \underline{3}=\underline{1}\ \underline{2}\ \underline{3}-\underline{2}%
\ \underline{1}\ \underline{3};
\\
\mathbf{V}_{(2,2)}  & =V^{\left\{  1,2\right\}  }V^{\left\{  3,4\right\}
}=\left(  \underline{1}\ \underline{2}-\underline{2}\ \underline{1}\right)
\left(  \underline{3}\ \underline{4}-\underline{4}\ \underline{3}\right)
=\underline{1}\ \underline{2}\ \underline{3}\ \underline{4}-\underline{1}%
\ \underline{2}\ \underline{4}\ \underline{3}-\underline{2}\ \underline{1}%
\ \underline{3}\ \underline{4}+\underline{2}\ \underline{1}\ \underline{4}%
\ \underline{3}.
\end{align*}

\end{example}


\subsection{Action on the $\mathbf{B}$-basis}


The Dynkin elements $\mathbf{V}_{\alpha}$ are themselves images of certain
elements of the descent algebra $\Sigma_{n}$ under the injection
$\kk \left[  S_{n}\right]  \rightarrow \kk \left[  W_{n,n}\right]
= F_{n,n}
,\ a\mapsto\underline{a}$, namely of the \textquotedblleft\emph{power sums of
the first kind}\textquotedblright\ $\Psi_{\alpha}$ studied in \cite[\S 5]%
{NCSF1}. However, we will use another of their properties
-- namely, a formula
for the action of $\mathbf{B}_{\beta}\in \kk \left[  S_{n}\right]  $ on
the $\mathbf{V}_{\alpha}$, which appears as Lemma 9.33 in
\cite{Reutenauer1993} and as Theorem 2.1 in \cite{GarReu89}:

\begin{theorem}[Lemma 9.33 of \cite{Reutenauer1993}, or Theorem 2.1 of \cite{GarReu89}]\label{thm.vb}
Let $\beta = \tup{\beta_1,\beta_2,\ldots,\beta_p}$
and $\gamma = \tup{\gamma_1,\gamma_2,\ldots,\gamma_q}$
be two compositions of $n$.
Let $\mathcal{L}_{\beta,\gamma}$ be the set of all ordered set partitions
$T=(T_{1},T_{2},\ldots,T_{q})$ of the index set
$[p]=\left\{  1,2,\ldots ,p\right\}  $ into $q$ blocks that satisfy
\begin{align}
\sum_{i\in T_{j}}\beta_{i} = \gamma_{j} \quad\text{for all }j.
\label{eq.thm.vb.ass}
\end{align}
For each block $T_j$ of
$T$, let $V^{!T_{j}}$ denote the product
of the Dynkin elements $V^{\Set(\beta)_{i}}$ with $i\in T_{j}$
(in the order of increasing $i$). Then
\begin{align}
\mathbf{V}_{\beta}\,\mathbf{B}_{\gamma}=\sum_{T=\left(  T_{1},T_{2}%
,\ldots,T_{q}\right)
\in \mathcal{L}_{\beta,\gamma}} V^{!T_{1}} V^{!T_2} \cdots V^{!T_{q}}.
\label{eq.thm.vb.clm}
\end{align}

\end{theorem}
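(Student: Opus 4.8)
The plan is to give a Hopf-algebraic proof, valid over an arbitrary commutative ring $\kk$. Equip the free algebra $F_n$ with its standard bialgebra structure: the product is concatenation, and the coproduct $\delta\colon F_n\to F_n\otimes F_n$ is the unique algebra morphism with $\delta(\underline{i})=\underline{i}\otimes 1+1\otimes\underline{i}$ for all $i\in[n]$; explicitly, if $w=\underline{w_1}\,\underline{w_2}\cdots\underline{w_k}$, then $\delta(w)=\sum_{L\subseteq[k]}w_L\otimes w_{[k]\setminus L}$, where $w_L\in F_{n,\abs{L}}$ denotes the subword of $w$ formed by the letters in the positions belonging to $L$. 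This $\delta$ is coassociative, so for $q\ge 1$ there is a well-defined iterated coproduct $\delta^{(q)}\colon F_n\to F_n^{\otimes q}$ (with $\delta^{(1)}=\id$ and $\delta^{(2)}=\delta$), and it is again an algebra morphism; write $m^{(q)}\colon F_n^{\otimes q}\to F_n$ for the iterated concatenation map. Two facts will carry the proof. First, a dictionary entry: for $\gamma=(\gamma_1,\dots,\gamma_q)\vDash n$, letting $\pi_\gamma$ be the projection of $F_n^{\otimes q}$ onto its summand $F_{n,\gamma_1}\otimes\cdots\otimes F_{n,\gamma_q}$, one has $a\cdot\mathbf B_\gamma=m^{(q)}\tup{\pi_\gamma\tup{\delta^{(q)}(a)}}$ for every $a\in F_{n,n}$. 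Second, primitivity of Dynkin elements: $\delta(V^S)=V^S\otimes 1+1\otimes V^S$ for every nonempty $S\subseteq[n]$, and hence $\delta^{(q)}(V^S)=\sum_{j=1}^{q}1^{\otimes(j-1)}\otimes V^S\otimes 1^{\otimes(q-j)}$.

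The dictionary entry is checked on a single word $w\in W_{n,n}$, both sides being $\kk$-linear in $a$. There it reduces to a classical fact: the permutations $\sigma\in S_n$ with $\Des(\sigma)\subseteq\Set(\gamma)$ are precisely those that increase on each of the intervals $\Set(\gamma)_1,\dots,\Set(\gamma)_q$, and the assignment $\sigma\mapsto\tup{\sigma\tup{\Set(\gamma)_1},\dots,\sigma\tup{\Set(\gamma)_q}}$ is a bijection from these $\sigma$ onto the ordered set partitions $(U_1,\dots,U_q)$ of $[n]$ into blocks of sizes $\gamma_1,\dots,\gamma_q$. Under this bijection the word $w\cdot\sigma$ equals the concatenation $w_{U_1}w_{U_2}\cdots w_{U_q}$, which is exactly the $(U_1,\dots,U_q)$-summand of $m^{(q)}\pi_\gamma\delta^{(q)}(w)$; summing over $\sigma$ (equivalently, over $(U_1,\dots,U_q)$) gives the claim. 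The primitivity of $V^S$ follows by induction on $\abs{S}$: the generators $\underline{s}$ are primitive by construction, the commutator of two primitive elements is again primitive (a one-line verification using that $\delta$ is multiplicative), and $V^S=\ive{V^{S\setminus\{\max S\}},\ \underline{\max S}}$ by Definition~\ref{def.dynkin}; the formula for $\delta^{(q)}(V^S)$ is then immediate from coassociativity.

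Granting these, the theorem is a short computation. Set $B_i:=\Set(\beta)_i$, so that $\mathbf V_\beta=V^{B_1}V^{B_2}\cdots V^{B_p}$. The dictionary entry gives $\mathbf V_\beta\,\mathbf B_\gamma=m^{(q)}\tup{\pi_\gamma\tup{\delta^{(q)}(\mathbf V_\beta)}}$, and since $\delta^{(q)}$ is an algebra morphism, primitivity yields
\[
\delta^{(q)}(\mathbf V_\beta)=\prod_{i=1}^{p}\delta^{(q)}\tup{V^{B_i}}=\prod_{i=1}^{p}\ \sum_{j=1}^{q}\tup{1^{\otimes(j-1)}\otimes V^{B_i}\otimes 1^{\otimes(q-j)}}.
\]
Expanding this product — the outer factors being multiplied in the order $i=1,2,\dots,p$, and multiplication in $F_n^{\otimes q}$ being componentwise — produces $\sum_{f\colon[p]\to[q]}\bigotimes_{j=1}^{q}V^{!f^{-1}(j)}$, where for $A\subseteq[p]$ we write $V^{!A}$, as in the statement of the theorem, for the product of the $V^{B_i}$ with $i\in A$ taken in increasing order of $i$ (this order being forced by the order of the outer product). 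The $j$-th tensor factor of the $f$-summand is homogeneous of degree $\sum_{i\in f^{-1}(j)}\beta_i$, so $\pi_\gamma$ keeps exactly those $f$ with $\sum_{i\in f^{-1}(j)}\beta_i=\gamma_j$ for all $j$; for such $f$ the tuple $T:=(f^{-1}(1),\dots,f^{-1}(q))$ lies in $\mathcal L_{\beta,\gamma}$ (each block being automatically nonempty, as $\gamma_j\ge 1$), and conversely every element of $\mathcal L_{\beta,\gamma}$ arises this way. Finally $m^{(q)}$ concatenates the tensor factors, sending the $T$-term to $V^{!T_1}V^{!T_2}\cdots V^{!T_q}$, and summing over $T\in\mathcal L_{\beta,\gamma}$ yields \eqref{eq.thm.vb.clm}.

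I expect the only real obstacle to be the dictionary entry. Each of its ingredients — the ordered-set-partition description of $\{\sigma\in S_n:\Des(\sigma)\subseteq\Set(\gamma)\}$ and the expansion of the coproduct $\delta$ — is classical, but identifying them correctly demands care with the conventions for one-line notation and for the direction of the right $S_n$-action on $W_{n,n}$. The other point deserving attention is the bookkeeping of the order in which the factors $V^{B_i}$ accumulate within a fixed tensor slot during the expansion, since this is precisely what produces the ``increasing $i$'' clause in the definition of $V^{!T_j}$. Everything else is formal, and in particular no division by integers is needed, so the argument goes through over any $\kk$.
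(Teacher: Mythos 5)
Your proof is correct, and it is essentially the same Hopf-algebraic argument that the paper sketches in the Remark immediately following Theorem~\ref{thm.vb} (the paper's ``official'' proof is a citation to Reutenauer's Lemma 9.33). Your ``dictionary entry'' is the paper's step 1 ($\BB_\gamma$ acting as $m^{[q]}\circ P_\gamma\circ\Delta^{[q]}$), and your use of primitivity is the paper's observation about Dynkin elements. The only tactical difference is in step 2: where you expand $\delta^{(q)}(\mathbf V_\beta)=\prod_i\delta^{(q)}(V^{B_i})$ directly into $\sum_{f:[p]\to[q]}\bigotimes_j V^{!f^{-1}(j)}$ and then apply $\pi_\gamma$ and $m^{(q)}$, the paper instead suggests verifying the claim first for primitives that are generators of the free algebra and then transporting it to general primitives via a bialgebra morphism. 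Your direct expansion is arguably more self-contained, and you correctly keep track of the order in which the $V^{B_i}$ accumulate within each tensor slot, which is precisely where the ``increasing $i$'' clause in $V^{!T_j}$ comes from.
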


\begin{remark}
Reutenauer's \cite[Lemma 9.33]{Reutenauer1993}
is even more general: He views $F_n$ as the tensor algebra $T\tup{V}$
of the free $\kk$-module $V = \kk^n$ with basis $\tup{\underline 1,
\underline 2,\ldots,\underline n}$.
This equips $F_n$ with a Hopf algebra structure: that of the tensor
Hopf algebra $T\tup{V}$.
All Dynkin elements $V^S$ are primitive elements of $T\tup{V}$,
as can be easily proved by induction on $\abs{S}$.

Let $\gamma = \left(
\gamma_{1},\gamma_{2},\ldots,\gamma_{q} \right) $ be a composition of $n$, and
let $P_{1}, P_{2}, \ldots, P_{k}$ be $k$ homogeneous primitive
elements of $T\tup{V}$ with total degree
$\deg P_{1} + \deg P_{2} + \cdots+ \deg P_{k} = n$.

Set $P_{W} := P_{w_{1}} P_{w_{2}} \cdots P_{w_{m}}$ for any subset $W =
\left\{  w_{1} < w_{2} < \cdots< w_{m} \right\} $ of $[k]$.

Then, the right action of the symmetric group $S_{n}$ on $F_{n,n}$
satisfies
\[
\left(  P_{1} P_{2} \cdots P_{k} \right)  \mathbf{B}_{\gamma}
= \sum P_{T_{1}} P_{T_{2}} \cdots P_{T_{q}},
\]
where the sum ranges over all ordered set partitions $\left(  T_{1}, T_{2},
\ldots, T_{q} \right) $ of $[k]$ such that $\deg P_{T_{i}} = \gamma_{i}$ for
each $i$.

This can be proved slickly using Hopf algebra theory; here is an outline
of the proof:
\begin{enumerate}
\item First, observe that the action
of $\mathbf{B}_{\gamma}$ on the $n$-th degree component of $T\left(  V\right)
$ is the Hopf-algebraic operator $m^{[q]}\circ P_{\gamma}\circ\Delta^{[q]}$
(where $\Delta^{[q]} : T\tup{V} \to T\tup{V}^{\otimes q}$
is the $q$-fold comultiplication,
$m^{[q]} : T\tup{V}^{\otimes q} \to T\tup{V}$ is the
$q$-fold multiplication, and $P_\gamma$ is the projection
$T\tup{V}^{\otimes q} \to T\tup{V}^{\otimes q}$ onto the
$\tup{ \gamma_{1},\gamma_{2},\ldots,\gamma_{q} }$-th multigraded component),
since $\Delta^{[q]}$ sends each ``word'' $v_1 v_2 \cdots v_n \in T\tup{V}$
(with $v_1, v_2, \ldots, v_n \in V$) to the tensor
$\sum_{W_1 \sqcup W_2 \sqcup \cdots \sqcup W_q = [n]}
v_{W_1} \otimes v_{W_2} \otimes \cdots \otimes v_{W_q} \in T\tup{V}^{\otimes q}$,
where $v_W$ for a subset $W =
\left\{  w_{1} < w_{2} < \cdots< w_{m} \right\} \subseteq [n]$
is defined to be $v_{w_1} v_{w_2} \cdots v_{w_m} \in T\tup{V}$.
\item Then, show that the latter
operator $m^{[q]}\circ P_{\gamma}\circ\Delta^{[q]}$
sends $P_{1}P_{2}\cdots P_{k}$ to $\sum P_{T_{1}}P_{T_{2}}\cdots
P_{T_{q}}$.
The easiest way to do so is by observing that this holds when
$P_1, P_2, \ldots, P_k$ are themselves among the generators of
the free algebra $T\tup{V}$ (that is, belong to a basis of $V$),
but the general case then follows easily
(since the primitivity of $P_1, P_2, \ldots, P_k$ ensures that
there is a bialgebra morphism from a free algebra to $T\tup{V}$
that sends its generators to $P_{1},P_{2},\ldots,P_{k}$).
\end{enumerate}
\end{remark}


\begin{remark}
The products $V^{!T_1} V^{!T_2} \cdots V^{!T_{q}}$ in Theorem~\ref{thm.vb}
are -- in general -- not of the form $\mathbf{V}_{\gamma}$. For instance,
\[
V^{\{3\}}V^{\{1,2\}}\;
=\;\underline3(\underline1\ \underline2-\underline2\ \underline1)\;
=\;\underline3\ \underline1\ \underline2-\underline3\ \underline2\ \underline1,
\]
a summand in $\mathbf{V}_{(2,1)}\mathbf{B}_{(1,2)}$,
differs from both
\[
\mathbf{V}_{(2,1)}\;=\;V^{\{1,2\}}V^{\{3\}}\;
=\;(\underline1\ \underline2-\underline2\ \underline1)\underline3\;
=\;\underline1\ \underline2\ \underline3
-\underline2\ \underline1\ \underline3
\]
and
\[
\mathbf{V}_{(1,2)}\;
=\;\underline1(\underline2\ \underline3-\underline3\ \underline2)\;
=\;\underline1\ \underline2\ \underline3-\underline1\ \underline3\ \underline2.
\]
\end{remark}

\begin{example}

Consider $\alpha=(2,2)$ and $\beta=(3,1)$. We have
\begin{align*}
\mathbf{V}_{(2,2)} &  =(\ul1\ul2-\ul2\ul1)(\ul3\ul4-\ul4\ul3)=\ul1\ul2\ul3\ul4-\ul2\ul1\ul3\ul4-\ul1\ul2\ul4\ul3+\ul2\ul1\ul4\ul3,\\
\mathbf{B}_{(3,1)} &  =1234+1243+1342+2341.
\end{align*}
Hence
\begin{align*}
\mathbf{V}_{(2,2)}\mathbf{B}_{(3,1)} &
=(\ul1\ul2\ul3\ul4-\ul2\ul1\ul3\ul4-\ul1\ul2\ul4\ul3+\ul2\ul1\ul4\ul3)(1234+1243+1342+2341)\\
&=\ul1\ul2\ul3\ul4-\ul2\ul1\ul3\ul4-\ul1\ul2\ul4\ul3+\ul2\ul1\ul4\ul3
+\ul1\ul2\ul4\ul3-\ul2\ul1\ul4\ul3-\ul1\ul2\ul3\ul4+\ul2\ul1\ul3\ul4\\
&\qquad \qquad +\ul1\ul3\ul4\ul2-\ul2\ul3\ul4\ul1-\ul1\ul4\ul3\ul2+\ul2\ul4\ul3\ul1
+\ul2\ul3\ul4\ul1-\ul1\ul3\ul4\ul2-\ul2\ul4\ul3\ul1+\ul1\ul4\ul3\ul2\\
&  =0.
\end{align*}

On the other hand, for $\alpha=\beta=(2,2)$,
\begin{align*}
\mathbf{V}_{(2,2)}\mathbf{B}_{(2,2)}  & = (\ul1\ul2\ul3\ul4 - \ul2\ul1\ul3\ul4 - \ul1\ul2\ul4\ul3 + \ul2\ul1\ul4\ul3) (1234 +
1423 + 1324 + 2413 + 2314 + 3412)\\[2mm]
& =\, V_{2}^{\{1,2\}} V_{2}^{\{3,4\}} + V_{2}^{\{3,4\}} V_{2}^{\{1,2\}} \;\neq\; 0.
\end{align*}

\end{example}


\section{Proof of the main theorem}
\label{sec.proof}

A direct consequence of Theorem~\ref{thm.vb} is the following vanishing
criterion:
\begin{align}
\widetilde{\beta}\npreceq_{\pi}\widetilde{\gamma}
\quad\Longrightarrow\quad
\mathbf{V}_{\beta}\,\mathbf{B}_{\gamma}=0
\label{eq.VB-zero}
\end{align}
(since $\mathcal{L}_{\beta,\gamma} = \varnothing$ in this case).
A slightly less obvious particular case is the following:

\begin{lem}
\label{lem.VB-eqsize}Let $\beta=\left(  \beta_{1},\beta_{2},\ldots,\beta
_{p}\right)  $ and $\gamma=\left(  \gamma_{1},\gamma_{2},\ldots,\gamma
_{p}\right)  $ be two compositions of $n$ having the same length. Then,%
\[
\mathbf{V}_{\beta}\,\mathbf{B}_{\gamma}=\sum_{\substack{\chi\in S_{p}%
;\\\beta_{\chi\left(  i\right)  }=\gamma_{i}\text{ for each }i}%
}
V^{\Set(\beta)_{\chi(1)}} V^{\Set(\beta)_{\chi(2)}} \cdots
V^{\Set(\beta)_{\chi(p)}}.
\]

\end{lem}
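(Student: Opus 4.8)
The plan is to derive Lemma~\ref{lem.VB-eqsize} as the special case of Theorem~\ref{thm.vb} in which $p = q$, i.e., where $\beta$ and $\gamma$ have the same length. The point is that when $\ell(\beta) = \ell(\gamma) = p$, every ordered set partition $T = (T_1, T_2, \ldots, T_p)$ of $[p]$ into $p$ blocks must have all blocks of size exactly $1$ (since there are $p$ nonempty blocks partitioning a $p$-element set). Such an ordered set partition is therefore nothing but a sequence $(\{\chi(1)\}, \{\chi(2)\}, \ldots, \{\chi(p)\})$ for a unique permutation $\chi \in S_p$, where $\chi(j)$ is the single element of $T_j$.

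First I would recall the statement of Theorem~\ref{thm.vb} with $q$ replaced by $p$ and note that the set $\mathcal{L}_{\beta,\gamma}$ of ordered set partitions $T = (T_1,\ldots,T_p)$ of $[p]$ into $p$ blocks satisfying $\sum_{i \in T_j} \beta_i = \gamma_j$ for all $j$ is in bijection with the set of permutations $\chi \in S_p$ satisfying $\beta_{\chi(j)} = \gamma_j$ for each $j$: the bijection sends $\chi$ to $(\{\chi(1)\},\ldots,\{\chi(p)\})$, and the condition $\sum_{i \in T_j}\beta_i = \gamma_j$ becomes $\beta_{\chi(j)} = \gamma_j$ because $T_j = \{\chi(j)\}$ is a singleton. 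Second, for such a singleton block $T_j = \{\chi(j)\}$, the element $V^{!T_j}$ — defined in Theorem~\ref{thm.vb} as the product of the $V^{\Set(\beta)_i}$ with $i \in T_j$ in increasing order of $i$ — is simply the single factor $V^{\Set(\beta)_{\chi(j)}}$. Substituting this into the right-hand side of \eqref{eq.thm.vb.clm} gives exactly
\[
\mathbf{V}_\beta \, \mathbf{B}_\gamma = \sum_{\substack{\chi \in S_p;\\ \beta_{\chi(i)} = \gamma_i \text{ for each } i}} V^{\Set(\beta)_{\chi(1)}} V^{\Set(\beta)_{\chi(2)}} \cdots V^{\Set(\beta)_{\chi(p)}},
\]
which is the claimed formula.

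The only genuinely routine verification is the combinatorial bijection between ordered set partitions of $[p]$ into exactly $p$ blocks and permutations in $S_p$; this is elementary (a partition of a $p$-set into $p$ nonempty parts forces all parts to be singletons, and ordering the parts is the same as choosing a permutation). There is no real obstacle here: the lemma is purely a matter of unwinding the notation of Theorem~\ref{thm.vb} in the equal-length case. I would therefore write the proof as a short paragraph invoking Theorem~\ref{thm.vb}, observing the forced singleton structure of the blocks, and translating the block condition and the definition of $V^{!T_j}$ accordingly.
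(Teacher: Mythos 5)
Your proposal is correct and follows exactly the same approach as the paper's own proof: specialize Theorem~\ref{thm.vb} to $q = p$, observe that every ordered set partition of $[p]$ into $p$ blocks must consist of singletons and hence corresponds to a unique $\chi \in S_p$, translate the condition $\sum_{i \in T_j}\beta_i = \gamma_j$ into $\beta_{\chi(j)} = \gamma_j$, and simplify $V^{!T_j}$ to $V^{\Set(\beta)_{\chi(j)}}$.
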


\begin{proof}
Set $q = p$, and apply Theorem \ref{thm.vb}.
Observe that any ordered set partition $T \in \mathcal{L}_{\beta, \gamma}$
must consist entirely of $1$-element blocks (since it partitions a
$p$-element set into $p$ blocks), and thus has the form
$\tup{\set{\chi\tup{1}}, \set{\chi\tup{2}}, \ldots,
\set{\chi\tup{p}}}$ for a unique permutation $\chi \in S_p$.
Moreover, this latter permutation $\chi$ must satisfy
$\beta_{\chi\tup{i}} = \gamma_i$ for all $i \in [p]$,
in order for $T$ to satisfy \eqref{eq.thm.vb.ass}.
This is necessary and sufficient for $T \in \mathcal{L}_{\beta, \gamma}$.
Thus, \eqref{eq.thm.vb.clm} rewrites as
\begin{align*}
\mathbf{V}_{\beta}\,\mathbf{B}_{\gamma}  & =\sum_{\substack{\chi\in
S_{p};\\\beta_{\chi\left(  i\right)  }=\gamma_{i}\text{ for each }%
i}}V^{!\left\{  \chi\left(  1\right)  \right\}  }\cdots V^{!\left\{  \chi\left(
p\right)  \right\}  }
=\sum_{\substack{\chi\in S_{p};\\\beta_{\chi\left(  i\right)  }=\gamma
_{i}\text{ for each }i}}V^{\Set(\beta)_{\chi(1)}}\cdots
V^{\Set(\beta)_{\chi(p)}}.
\end{align*}
This proves Lemma \ref{lem.VB-eqsize}.
\end{proof}

We shall now study the products
$V^{\Set(\beta)_{\chi(1)}} V^{\Set(\beta)_{\chi(2)}} \cdots
V^{\Set(\beta)_{\chi(p)}}$
on the right hand side of Lemma \ref{lem.VB-eqsize} more closely.
These products are multilinear in the generators
$\underline 1,\underline 2,\ldots,\underline n$ of $F_n$,
so they
are linear combinations of words
$\underline v$ with $v \in S_n$.
The next lemma reveals some insights about the $v$'s that
appear in these combinations.

\begin{lem}
\label{lem.wuv}Let $w,u\in S_{n}$. Assume that the compositions $\beta=\left(
\beta_{1},\beta_{2},\ldots,\beta_{p}\right)  :=\operatorname{cLRM}^{\prime
}(w)$ and $\gamma=\left(  \gamma_{1},\gamma_{2},\ldots,\gamma_{p}\right)
:=\operatorname{cLRM}^{\prime}(u)$ are anagrams of one another, and let
$\chi\in S_{p}$ be a permutation satisfying%
\begin{equation}
\beta_{\chi\left(  i\right)  }=\gamma_{i}\qquad\text{for each }i\in\left[
p\right]  .
\label{eq.lem.wuv.bg}
\end{equation}

Let $v\in S_{n}$ be a permutation such that the word $\underline{v}\in
W_{n,n}$ appears in the expansion of the product $V^{\operatorname{Set}%
(\beta)_{\chi(1)}}\cdots V^{\operatorname{Set}(\beta)_{\chi(p)}}$.
Equivalently, let
$v\in S_{n}$ be a permutation that sends each $\operatorname*{Set}\left(
\gamma\right)  _{i}$ to $\operatorname*{Set}\left(  \beta\right)
_{\chi\left(  i\right)  }$ and has V-shape on each $\operatorname*{Set}\left(
\gamma\right)  _{i}$
(this is equivalent by Proposition~\ref{prop.dynkin.Vsh} and by
\eqref{eq.lem.wuv.bg}). Assume that
\[
vu=w.
\]
Then, $\chi= \id $ and $v= \id $ and $\gamma=\beta$.
\end{lem}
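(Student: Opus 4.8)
The plan is to reason about the constraint $vu = w$ together with the geometric description of $v$ as an order-changing permutation with V-shaped restrictions. First I would unpack what $vu = w$ means positionally: writing $v$ as a permutation of values, the one-line word $\underline w = \underline{vu}$ is obtained from $\underline u$ by replacing each entry $u(k)$ with $v(u(k))$. Since $v$ maps the $\gamma$-block $\operatorname{Set}(\gamma)_i$ bijectively onto the $\beta$-block $\operatorname{Set}(\beta)_{\chi(i)}$, and since the $\gamma$-blocks are exactly the intervals between consecutive left-to-right minima of $u$ (this is the content of $\gamma = \operatorname{cLRM}'(u)$, unwound through $\operatorname{LRM}'$ and $\operatorname{Set}$), I would track how $v$ moves the LRMs of $u$ around and compare with the LRMs of $w$, which must be governed by $\beta$.

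The key steps, in order: (1) Translate $\beta = \operatorname{cLRM}'(w)$ and $\gamma = \operatorname{cLRM}'(u)$ into the statement that $\operatorname{LRM}(u)$ consists of the $\gamma$-partial sums shifted appropriately — more precisely, the left-to-right minima of $u$ occur at a prescribed set of positions, and likewise for $w$. (2) Observe that the smallest block $\operatorname{Set}(\gamma)_1 = [1,\gamma_1]$ contains the value $1$, so $v$ sends $1$ into $\operatorname{Set}(\beta)_{\chi(1)}$; but $1$ is always a left-to-right minimum of $w$ as well, which forces $1 \in \operatorname{Set}(\beta)_{\chi(1)}$, hence $\operatorname{Set}(\beta)_{\chi(1)} = [1,\beta_1]$ and $\chi(1) = 1$. (3) Now peel off: the first $\gamma_1$ positions of $u$ form a V-shaped-after-relabeling segment whose image under $v$ is again the first $\beta_1 = \gamma_1$ positions of $w$; because $v$ has V-shape on $\operatorname{Set}(\gamma)_1$ and must produce the correct LRM pattern in $w$, a direct comparison of one-line notations (in the spirit of the lexicographic argument behind Lemma~\ref{lem.LRM.1}) pins down $v$ to be the identity on that block. (4) Induct on the remaining blocks: having fixed the first block, the permutation $v$ restricted to $[\gamma_1+1, n]$ satisfies the analogous hypotheses for the truncated compositions, so by induction $\chi$ restricted to $\{2,\dots,p\}$ is the identity and $v$ is the identity there too. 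Assembling, $\chi = \operatorname{id}$, $v = \operatorname{id}$, and consequently $\gamma = \beta$.

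The main obstacle I anticipate is Step (3) — showing that once the target block $\operatorname{Set}(\beta)_{\chi(i)}$ is forced to equal $\operatorname{Set}(\gamma)_i$, the V-shape constraint on $v$ over that block plus the requirement that $vu$ have exactly the prescribed left-to-right minima forces $v$ to act trivially there rather than, say, reversing the block. The subtlety is that a V-shaped bijection of an interval to itself need not be the identity, so one genuinely has to use the interaction with $u$: the entries of $u$ on that block are themselves increasing from a left-to-right minimum onward, and post-composing with a non-identity V-shaped $v$ would either create a spurious new left-to-right minimum in $w$ or destroy an existing one, contradicting $\beta = \operatorname{cLRM}'(w)$. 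I would isolate this as a small lemma about how V-shaped relabelings interact with the positions of left-to-right minima, proved by examining the unique valley index $\operatorname{val}$ and checking that any valley strictly inside the block is incompatible with the LRM count dictated by $\beta$. Once that local rigidity is in hand, the induction and the bookkeeping with $\chi$ are routine.
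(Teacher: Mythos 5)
Your proposal runs into a genuine gap at Step (2), and the direction of your induction is what causes it.

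You claim that because $1\in\operatorname{Set}(\gamma)_1$, we get $v(1)\in\operatorname{Set}(\beta)_{\chi(1)}$, and then that $1$ being a left-to-right minimum of $w$ ``forces $1\in\operatorname{Set}(\beta)_{\chi(1)}$'', hence $\chi(1)=1$. But the statement you actually know is about $v(1)$, not about $1$; nothing you have established shows $v(1)=1$. To relate the value $1$ in $w$ to the value $1$ in $u$, note that $w^{-1}(1)=u^{-1}\bigl(v^{-1}(1)\bigr)$, and $v^{-1}(1)$ lies in $\operatorname{Set}(\gamma)_{\chi^{-1}(1)}$, so you would first need to know $\chi^{-1}(1)=1$ — which is essentially what you are trying to prove. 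There is no clean anchor for the \emph{first} block: the smallest LRM of $w$ is always $1$, but its \emph{position} $w^{-1}(1)$ is not controlled until you know how $v$ acts, and you only learn that at the end of your induction.

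The paper inducts in the opposite direction — from the last block $\operatorname{Set}(\beta)_p$ downwards — and this is not a cosmetic choice. The largest LRM of $w$ always sits at position $1$, so $w(1)=v(u(1))$ with $u(1)$ the largest LRM of $u$, hence $u(1)\in\operatorname{Set}(\gamma)_p$ and $v(u(1))\in\operatorname{Set}(\beta)_{\chi(p)}$; on the other hand $w(1)=\beta_{\le p-1}+1\in\operatorname{Set}(\beta)_p$, which forces $\chi(p)=p$ with no prior knowledge. The descending induction step then goes through cleanly using the recursion
\[
\operatorname*{lrm}\nolimits_{j}\sigma
=\sigma\Bigl(\min\bigl(\sigma^{-1}\bigl(\bigl[\operatorname*{lrm}\nolimits_{j-1}\sigma-1\bigr]\bigr)\bigr)\Bigr),
\]
which produces LRMs from the largest down and meshes with the fact that, by the inductive hypothesis, $v$ already fixes everything above $\beta_{\le i}$. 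Your ascending induction has no analogue of this: the corresponding recursion for the smallest LRMs would require knowing $v$ on the low blocks before you could compare $u^{-1}$ and $w^{-1}$, a chicken-and-egg problem.

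Your Step (3) idea is morally correct but the route you describe (a lexicographic comparison \`a la Lemma~\ref{lem.LRM.1}) is not needed and is somewhat misleading, since that lemma is about descent sets of a left factor, not about V-shapes. What actually closes the step, once you know the block is fixed setwise and that $v$ sends the leftmost position of that block to the block's \emph{smallest} value, is the one-line observation that a V-shaped word beginning with its minimum must be increasing, hence $v$ is the identity there. If you rework your argument as a descending induction on the blocks, your Steps (1), (3), and (4) will then go through essentially as the paper does.
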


\begin{proof}
For each $i\in\left[  p\right]  $, we define a claim $\mathcal{C}\tup{i}$
as follows:
\begin{quote}
\textit{Claim $\mathcal{C}\tup{i}$:}
We have $\chi\tup{i} = i$
and $\gamma_i = \beta_i$
and $\Set\tup{\beta}_i = \Set\tup{\gamma}_i$
and $\left. v\mid_{\Set\tup{\beta}_i} \right. = \id$.
\end{quote}
It clearly suffices to show that $\mathcal{C}\left(  i\right)  $ holds for all
$i\in\left[  p\right]  $. We will prove this by strong descending induction
on $i$. That is, we fix $i\in\left[  p\right]  $, and we assume that
all the ``higher'' claims
$\mathcal{C}\left(  i+1\right)  ,\mathcal{C}\left(  i+2\right)  ,\ldots
,\mathcal{C}\left(  p\right)  $ hold. We must then prove $\mathcal{C}\left(
i\right)  $.

By \eqref{eq.alpha-blocks},
we have $\operatorname*{Set}\left(
\beta\right)  _{i}=\left[  \beta_{\leq i-1}+1,\ \beta_{\leq i}\right]  $
and $\operatorname*{Set}\left(  \gamma\right)  _{i}=\left[
\gamma_{\leq i-1}+1,\ \gamma_{\leq i}\right]  $.

If $\sigma\in S_{n}$ is a permutation and $j\leq\left\vert \operatorname*{LRM}%
\sigma\right\vert $ is a positive integer, then we let $\operatorname*{lrm}%
\nolimits_{j}\sigma$ denote the $j$-th largest LRM of $\sigma$. We extend this
to $j=0$ by setting $\operatorname*{lrm}\nolimits_{0}\sigma:=n+1$. We note
that the LRMs of a permutation $\sigma$ appear in decreasing order when
reading $\sigma$ from left to right, so that $\operatorname*{lrm}%
\nolimits_{j}\sigma$ is the $j$-th LRM of $\sigma$ encountered in such a read.
Thus, it is easy to see that each $\sigma\in S_{n}$ and each positive
$j\leq\left\vert \operatorname*{LRM}\sigma\right\vert $ satisfy%
\begin{equation}
\operatorname*{lrm}\nolimits_{j}\sigma=\sigma\left(  \min\left(  \sigma
^{-1}\left(  \left[  \operatorname*{lrm}\nolimits_{j-1}\sigma-1\right]
\right)  \right)  \right)  .\label{pf.lem.wuv.lrm-min}%
\end{equation}
(Indeed, this is just saying that if we remove all numbers larger than
$\operatorname*{lrm}\nolimits_{j-1}\sigma-1$ from the one-line notation of
$\sigma$, then the first of the remaining numbers will be $\operatorname*{lrm}%
\nolimits_{j}\sigma$; but this is clear from the definition of an LRM.)

But $\beta=\operatorname{cLRM}^{\prime}\left(  w\right)  $ shows that
$\beta_{\leq i-1}+1$ is the $i$-th smallest LRM of $w$, and hence is the
$\left(  p+1-i\right)  $-th largest LRM of $w$ (since $w$ has $\ell\left(
\operatorname{cLRM}^{\prime}(w)\right)  =\ell\tup{\beta} = p$
many LRMs in total). That is,
$\beta_{\leq i-1}+1=\operatorname*{lrm}\nolimits_{p+1-i}w$. Likewise,
$\beta_{\leq i}+1=\operatorname*{lrm}\nolimits_{p-i}w$. But
(\ref{pf.lem.wuv.lrm-min}) (applied to $\sigma=w$ and $j=p+1-i$) yields
\[
\operatorname*{lrm}\nolimits_{p+1-i}w=w\left(  \min\left(  w^{-1}\left(
\left[  \operatorname*{lrm}\nolimits_{p-i}w-1\right]  \right)  \right)
\right)  .
\]
Using $\beta_{\leq i-1}+1=\operatorname*{lrm}\nolimits_{p+1-i}w$ and
$\beta_{\leq i}+1=\operatorname*{lrm}\nolimits_{p-i}w$, this rewrites as%
\begin{equation}
\beta_{\leq i-1}+1=w\left(  \min\left(  w^{-1}\left(  \left[  \beta_{\leq
i}\right]  \right)  \right)  \right)  .\label{pf.lem.wuv.4b}%
\end{equation}
Similarly,%
\begin{equation}
\gamma_{\leq i-1}+1=u\left(  \min\left(  u^{-1}\left(  \left[  \gamma_{\leq
i}\right]  \right)  \right)  \right)  .\label{pf.lem.wuv.4g}%
\end{equation}

However, our induction hypothesis yields $\beta_{j}=\gamma_{j}$ for all $j>i$,
and thus easily $\beta_{\leq i}=\gamma_{\leq i}$ (since $\left\vert
\beta\right\vert =n=\left\vert \gamma\right\vert $). Furthermore, it shows
that $\left. v\mid_{\operatorname*{Set}\left(  \beta\right)  _{j}} \right.
= \id $ for all $j>i$, and thus
$\left. v\mid_{\left[  n\right]  -\left[  \beta_{\leq
i}\right]  }\right. = \id $,
so that
$v\tup{\left[  n\right]  -\left[  \beta_{\leq
i}\right]} = \left[  n\right]  -\left[  \beta_{\leq
i}\right]$.
By taking complements, we obtain
$v\tup{\left[  \beta_{\leq i}\right]} = \left[  \beta_{\leq i}\right]$,
thus
$v^{-1}\tup{\left[  \beta_{\leq i}\right]} = \left[  \beta_{\leq i}\right]$.
But $vu = w$, so that $w^{-1}=u^{-1}v^{-1}$ and therefore
$w^{-1}\left( \left[  \beta_{\leq i}\right]  \right)
= u^{-1}\left( v^{-1} \left( \left[  \beta_{\leq i}\right]  \right)\right)
= u^{-1}\left(  \left[  \beta_{\leq i}\right]  \right)  $,
because
$v^{-1}\tup{\left[  \beta_{\leq i}\right]} = \left[  \beta_{\leq i}\right]$.

Now, applying $v$ to (\ref{pf.lem.wuv.4g}), we find
\begin{align}
v\left(  \gamma_{\leq i-1}+1\right)    & =v\left(  u\left(  \min\left(
u^{-1}\left(  \left[  \gamma_{\leq i}\right]  \right)  \right)  \right)
\right)  \nonumber\\
& =w\left(  \min\left(  u^{-1}\left(  \left[  \beta_{\leq i}\right]  \right)
\right)  \right)  \ \ \ \ \ \ \ \ \ \ \left(  \text{since }vu=w
\text{ and } \gamma_{\leq i} = \beta_{\leq i} \right)
\nonumber\\
& =w\left(  \min\left(  w^{-1}\left(  \left[  \beta_{\leq i}\right]  \right)
\right)  \right)  \ \ \ \ \ \ \ \ \ \ \left(  \text{since }w^{-1}\left(
\left[  \beta_{\leq i}\right]  \right)  =u^{-1}\left(  \left[  \beta_{\leq
i}\right]  \right)  \right)  \nonumber\\
& =\beta_{\leq i-1}+1\ \ \ \ \ \ \ \ \ \ \left(  \text{by (\ref{pf.lem.wuv.4b}%
)}\right)  \label{pf.lem.wuv.v1}\\
& \in\left[  \beta_{\leq i-1}+1,\ \beta_{\leq i}\right]  =\operatorname*{Set}%
\left(  \beta\right)  _{i}.\label{pf.lem.wuv.vSet}%
\end{align}
But $\gamma_{\leq i-1}+1\in\left[  \gamma_{\leq i-1}+1,\ \gamma_{\leq
i}\right]  =\operatorname*{Set}\left(  \gamma\right)  _{i}$, so that%
\[
v\left(  \gamma_{\leq i-1}+1\right)  \in v\left(  \operatorname*{Set}\left(
\gamma\right)  _{i}\right)  =\operatorname*{Set}\left(  \beta\right)
_{\chi\left(  i\right)  }%
\]
(by the requirements on $v$). Confronting this with (\ref{pf.lem.wuv.vSet}),
we conclude that $\chi\left(  i\right)  =i$, since otherwise
$\operatorname*{Set}\left(  \beta\right)  _{\chi\left(  i\right)  }$ would be
disjoint from $\operatorname*{Set}\left(  \beta\right)  _{i}$. Thus,
(\ref{eq.lem.wuv.bg}) becomes $\gamma_{i}=\beta_{i}$. Thus, $\gamma_{\leq
i-1}=\beta_{\leq i-1}$ (by subtracting $\gamma_{i}=\beta_{i}$ from
$\gamma_{\leq i}=\beta_{\leq i}$). Combining this with $\gamma_{\leq i}%
=\beta_{\leq i}$, we conclude that $\operatorname*{Set}\left(  \beta\right)
_{i}=\operatorname*{Set}\left(  \gamma\right)  _{i}$.

Recall that the permutation $v$ has V-shape on $\operatorname*{Set}\left(
\gamma\right)  _{i} =\left[
\gamma_{\leq i-1}+1,\ \gamma_{\leq i}\right] $. That is, the word
$\tup{v\tup{\gamma_{\leq i-1} + 1}, v\tup{\gamma_{\leq i-1} + 2}, \ldots,
v\tup{\gamma_{\leq i}}}$ has V-shape.
However, the letters of this word are $\gamma_{\leq i-1} + 1,
\gamma_{\leq i-1} + 2, \ldots, \gamma_{\leq i}$ in some order
(since $v$ sends $\operatorname*{Set}\left(
\gamma\right)  _{i}$ to $\operatorname*{Set}\left(  \beta\right)
_{\chi\left(  i\right)  }=\operatorname*{Set}\left(  \beta\right)
_{i}=\operatorname*{Set}\left(  \gamma\right)  _{i}$),
and moreover the \textbf{leftmost} of these letters is the
\textbf{smallest} of them (since $\gamma_{\leq
i-1}=\beta_{\leq i-1}$ allows us to rewrite (\ref{pf.lem.wuv.v1}) as $v\left(
\gamma_{\leq i-1}+1\right)  =\gamma_{\leq i-1}+1$).
But a word that has V-shape and starts with its smallest letter
must be increasing; thus, in particular, the word
$\tup{v\tup{\gamma_{\leq i-1} + 1}, v\tup{\gamma_{\leq i-1} + 2}, \ldots,
v\tup{\gamma_{\leq i}}}$ is increasing.
Since its letters are $\gamma_{\leq i-1} + 1,
\gamma_{\leq i-1} + 2, \ldots, \gamma_{\leq i}$, we thus obtain
\[
\tup{v\tup{\gamma_{\leq i-1} + 1}, v\tup{\gamma_{\leq i-1} + 2}, \ldots,
v\tup{\gamma_{\leq i}}}
= \tup{\gamma_{\leq i-1} + 1,
\gamma_{\leq i-1} + 2, \ldots, \gamma_{\leq i}}.
\]
In other words,
$\left. v\mid_{\operatorname*{Set}\left(  \gamma\right)  _{i}}\right.
= \id $.
That is, $\left. v\mid_{\operatorname*{Set}\left(  \beta\right)  _{i}}%
\right. = \id $ (since $\operatorname*{Set}\left(  \beta\right)
_{i}=\operatorname*{Set}\left(  \gamma\right)  _{i}$).
We have now proved all four parts of our claim $\mathcal{C}\left(  i\right)
$. Thus, the induction step is complete, and as we said, this finishes the
proof of the lemma.
\end{proof}

For any $a \in F_n$ and any $\mathfrak w \in W_{n}$, let
$[\mathfrak w](a)$ denote the $\mathfrak w$-coefficient of $a$.

\begin{lem}
\label{lem.wu2}Let $w,u\in S_{n}$. Set $\beta:=\operatorname{cLRM}^{\prime
}(w)$ and $\gamma:=\operatorname{cLRM}^{\prime}(u)$. Then:

\textbf{(a)} We have $\left[ \underline w\right]
\left(  \mathbf{V}_{\beta} \mathbf{B}_{\beta}w\right)  =1$.

\textbf{(b)} If $u\neq w$, then $\left[  \underline w\right]
\left(  \mathbf{V}_{\beta
}\mathbf{B}_{\gamma}u\right)  =0$ unless $\widetilde{\beta}\prec_{\pi
}\widetilde{\gamma}$.
\end{lem}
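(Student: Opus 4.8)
The plan is to write the coefficient $\left[\underline w\right]\tup{\VV_\beta\BB_\gamma u}$ explicitly, by combining Lemma~\ref{lem.VB-eqsize} with Proposition~\ref{prop.dynkin.Vsh}, and then to read off both parts from the rigidity statement Lemma~\ref{lem.wuv}. I begin with the bookkeeping common to both parts, carried out under the hypothesis that $\beta$ and $\gamma$ are anagrams (so that $\ell\tup{\beta}=\ell\tup{\gamma}=:p$). By Lemma~\ref{lem.VB-eqsize}, the product $\VV_\beta\BB_\gamma$ is the sum, over all permutations $\chi\in S_p$ with $\beta_{\chi\tup{i}}=\gamma_i$ for every $i$, of the products $V^{\Set\tup{\beta}_{\chi\tup{1}}}\cdots V^{\Set\tup{\beta}_{\chi\tup{p}}}$. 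By Proposition~\ref{prop.dynkin.Vsh}, each such product is a signed sum $\sum_v\varepsilon_v\,\underline v$, where $v$ ranges over the permutations in $S_n$ that send each $\Set\tup{\gamma}_i$ onto $\Set\tup{\beta}_{\chi\tup{i}}$ and have V-shape on each $\Set\tup{\gamma}_i$, and $\varepsilon_v\in\set{+1,-1}$ is an explicit sign; in particular $\varepsilon_{\id}=1$, since the identity permutation restricts to an increasing word on each block, and an increasing word appears with coefficient $+1$ in the corresponding Dynkin element by Proposition~\ref{prop.dynkin.Vsh}. Multiplying on the right by $u$ and using the equivariance \eqref{eq.OLN-equivariant} (which gives $\underline v\cdot u=\underline{vu}$), I obtain that $\left[\underline w\right]\tup{\VV_\beta\BB_\gamma u}=\sum\varepsilon_v$, the sum ranging over all pairs $\tup{\chi,v}$ as above that additionally satisfy $vu=w$.

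For part (a), take $u=w$, so that $\gamma=\beta$. The constraint $vu=w$ then reads $v=\id$. Since the blocks $\Set\tup{\beta}_1,\dots,\Set\tup{\beta}_p$ are pairwise disjoint, the requirement that $v=\id$ send $\Set\tup{\gamma}_i=\Set\tup{\beta}_i$ onto $\Set\tup{\beta}_{\chi\tup{i}}$ forces $\chi=\id$; this $\chi$ does satisfy $\beta_{\chi\tup{i}}=\gamma_i$ (as $\gamma=\beta$), and $\id$ has V-shape on each block. Thus the sum collapses to the single term $\varepsilon_{\id}=1$, proving $\left[\underline w\right]\tup{\VV_\beta\BB_\beta w}=1$.

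For part (b), I distinguish cases according to the position of $\widetilde\beta$ relative to $\widetilde\gamma$ in the order $\preceq_\pi$; since $\widetilde\beta\preceq_\pi\widetilde\gamma$ is equivalent to ``$\widetilde\beta\prec_\pi\widetilde\gamma$ or $\widetilde\beta=\widetilde\gamma$'', these cases are exhaustive. If $\widetilde\beta\npreceq_\pi\widetilde\gamma$, then $\VV_\beta\BB_\gamma=0$ by the vanishing criterion \eqref{eq.VB-zero}, hence $\VV_\beta\BB_\gamma u=0$ and the coefficient is $0$. If $\widetilde\beta\prec_\pi\widetilde\gamma$, there is nothing to prove. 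Finally, if $\widetilde\beta=\widetilde\gamma$, then $\beta$ and $\gamma$ are anagrams and the bookkeeping above applies: every pair $\tup{\chi,v}$ entering the sum satisfies $vu=w$, and Lemma~\ref{lem.wuv} then forces $v=\id$ (and moreover $\chi=\id$ and $\gamma=\beta$); but $v=\id$ together with $vu=w$ yields $u=w$, contradicting the hypothesis $u\neq w$ of part (b). So no pair contributes, and the coefficient vanishes, as claimed.

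The only genuinely substantial ingredient here is Lemma~\ref{lem.wuv}, which is already established; granted it, all that remains is the routine bookkeeping above, together with the verification that $\tup{\id,\id}$ is a legitimate (and the only) contributing pair in part (a). I therefore do not anticipate any further obstacle.
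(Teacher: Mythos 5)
Your proof is correct and follows essentially the same route as the paper: Lemma~\ref{lem.VB-eqsize} to expand $\VV_\beta\BB_\gamma$, Proposition~\ref{prop.dynkin.Vsh} to identify the contributing V-shape permutations, and Lemma~\ref{lem.wuv} to force $v=\id$. The one small variation is in part~(a), where you deduce $v=\id$ and $\chi=\id$ directly from $vw=w$ and the disjointness of the $\beta$-blocks rather than invoking Lemma~\ref{lem.wuv}; this is a harmless simplification.
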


\begin{proof}
\textbf{(a)} Write $\beta$ as $\beta=\left(  \beta_{1},\beta_{2},\ldots
,\beta_{p}\right)  $.
Applying Lemma \ref{lem.VB-eqsize} to $\gamma=\beta$, we
find%
\[
\mathbf{V}_{\beta}\,\mathbf{B}_{\beta}=\sum_{\substack{\chi\in S_{p}%
;\\\beta_{\chi\left(  i\right)  }=\beta_{i}\text{ for each }i}}
V^{\operatorname{Set}(\beta)_{\chi(1)}}\cdots
V^{\operatorname{Set}(\beta)_{\chi(p)}}.
\]
Hence,%
\begin{equation}
\left[  \underline w \right]  \left(  \mathbf{V}_{\beta}\mathbf{B}_{\beta}w\right)
=\sum_{\substack{\chi\in S_{p};\\\beta_{\chi\left(  i\right)  }=\beta
_{i}\text{ for each }i}}\left[  \underline w\right]  \left( 
V^{\operatorname{Set}(\beta)_{\chi(1)}}\cdots
V^{\operatorname{Set}(\beta)_{\chi(p)}}\cdot w\right)  .\label{pf.lem.wu2.5}%
\end{equation}
However, if $\chi\in S_{p}$ satisfies $\beta_{\chi\left(  i\right)  }=\beta_{i}$
for each $i$, then
\begin{equation}
\left[  \underline w\right]  \left(  V^{\operatorname{Set}(\beta
)_{\chi(1)}}\cdots V^{\operatorname{Set}(\beta)_{\chi(p)}%
}w\right)  =%
\begin{cases}
1, & \text{if }\chi= \id ;\\
0, & \text{if }\chi\neq \id ,
\end{cases}
\label{pf.lem.wu2.6}%
\end{equation}
because Lemma \ref{lem.wuv} (applied to $u=w$ and $\gamma=\beta$)
shows that a term $\underline v$ in the expansion of the
product
$V^{\Set(\beta)_{\chi(1)}}\cdots V^{\Set(\beta)_{\chi(p)}}$
(where $v \in S_n$; keep in mind that this product belongs
to the image of $\kk[S_n]$ under $a \mapsto \underline a$)
can satisfy $vw = w$ (that is, $\underline vw=\underline w$)
only if $\chi= \id $ and
$v= \id $, and it is easy to see
(using Proposition~\ref{prop.dynkin.Vsh}) that the coefficient of
$\underline{ \id }$ in $V^{\operatorname{Set}(\beta
)_{\chi(1)}}\cdots V^{\operatorname{Set}(\beta)_{\chi(p)}}$
is $1$ for $\chi= \id $.
Using (\ref{pf.lem.wu2.6}), we can rewrite (\ref{pf.lem.wu2.5}) as%
\[
\left[  \underline w\right]  \left(  \mathbf{V}_{\beta}\mathbf{B}_{\beta}w\right)
=\sum_{\substack{\chi\in S_{p};\\\beta_{\chi\left(  i\right)  }=\beta
_{i}\text{ for each }i}}%
\begin{cases}
1, & \text{if }\chi= \id ;\\
0, & \text{if }\chi\neq \id %
\end{cases}
\ \ =1.
\]
This proves part \textbf{(a)}.
\medskip

\textbf{(b)} Assume that $u\neq w$, but we don't have $\widetilde{\beta}%
\prec_{\pi}\widetilde{\gamma}$. We must show that $[\underline w]\left(  \mathbf{V}%
_{\beta}\mathbf{B}_{\gamma}u\right)  =0$. If we don't have $\widetilde{\beta
}\preceq_{\pi}\widetilde{\gamma}$, then this is immediate from
(\ref{eq.VB-zero}). So we WLOG assume that we do have $\widetilde{\beta}\preceq_{\pi
}\widetilde{\gamma}$. Hence, $\widetilde{\beta}=\widetilde{\gamma}$ (since we
don't have  $\widetilde{\beta}\prec_{\pi}\widetilde{\gamma}$). Thus, the
compositions $\beta$ and $\gamma$ are anagrams of one another, and in
particular, have the same length; write them as $\beta=\left(  \beta_{1}%
,\beta_{2},\ldots,\beta_{p}\right)  $ and $\gamma=\left(  \gamma_{1}%
,\gamma_{2},\ldots,\gamma_{p}\right)  $. Now, Lemma \ref{lem.VB-eqsize} yields%
\begin{align}
\left[  \underline w\right]  \left(  \mathbf{V}_{\beta}\mathbf{B}_{\gamma}u\right)    &
=\left[  \underline w\right]  \left(  \sum_{\substack{\chi\in S_{p};\\\beta_{\chi\left(
i\right)  }=\gamma_{i}\text{ for each }i}}
V^{\operatorname{Set}(\beta)_{\chi(1)}}\cdots
V^{\operatorname{Set}(\beta)_{\chi(p)}}\cdot u\right)  \nonumber\\
& =\sum_{\substack{\chi\in S_{p};\\\beta_{\chi\left(  i\right)  }=\gamma
_{i}\text{ for each }i}}\left[  \underline w\right]  \left( 
V^{\operatorname{Set}(\beta)_{\chi(1)}}\cdots 
V^{\operatorname{Set}(\beta)_{\chi(p)}}\cdot u\right)  .
\label{pf.lem.wu2.8}%
\end{align}
But each addend $\left[ \underline w\right]  \left( 
V^{\operatorname{Set}(\beta)_{\chi(1)}}\cdots
V^{\operatorname{Set}(\beta)_{\chi(p)}}\cdot u\right)  $ in this sum is $0$,
since Lemma \ref{lem.wuv} shows that a term $\underline v$ in the expansion of
$V^{\operatorname{Set}(\beta)_{\chi(1)}}\cdots
V^{\operatorname{Set}(\beta)_{\chi(p)}}$ can satisfy $vu=w$
(that is, $\underline v u = \underline w$)
only if
$\chi= \id $ and $v= \id $, but this is impossible
(since $v= \id $ would simplify $vu=w$ to $u=w$, contradicting
our assumption $u\neq w$). Hence, the whole sum in (\ref{pf.lem.wu2.8}) is
$0$, and we obtain $\left[  \underline w\right]  \left(  \mathbf{V}_{\beta}%
\mathbf{B}_{\gamma}u\right)  =0$. This proves part \textbf{(b)}.
\end{proof}

\begin{proof}
[Proof of Theorem \ref{thm.LRM.filtbasis}.] We showed that $\mathcal{S}%
_{\alpha}\subseteq\mathcal{R}_{\alpha}$ in Section \ref{sec.rs}. Thus, it
remains to prove that $\mathcal{R}_{\alpha}\subseteq\mathcal{S}_{\alpha}$. So
let $b\in\mathcal{R}_{\alpha}$ be arbitrary. We must show that $b\in
\mathcal{S}_{\alpha}$.
The family $\{\mathbf{B}_{\cLRM'%
(w)}\,w\}_{w\in S_{n}}=\{\mathbf{B}_{\operatorname*{LRM}\nolimits^{\prime}%
(w)}\,w\}_{w\in S_{n}}$ is a basis of $\mathcal{A}$ by Corollary
\ref{cor.LRM.basis}. Hence, we can expand $b$ as%
\begin{equation}
b=\sum_{w\in S_{n}}\lambda_{w}\,\mathbf{B}_{\operatorname*{cLRM}%
\nolimits^{\prime}(w)}\,w\label{pf.thm.LRM.filtbasis.exp}%
\end{equation}
with $\lambda_{w}\in \kk$.
We claim that $\lambda_{w}=0$ for all $w\in S_{n}$ satisfying
$\widetilde{\operatorname{cLRM}^{\prime}(w)}\npreceq_{\pi}\widetilde{\alpha}$.
Once this claim is proved, (\ref{pf.thm.LRM.filtbasis.exp}) will reduce to a
sum over the $w$ that satisfy $\widetilde{\operatorname{cLRM}^{\prime}(w)}%
\preceq_{\pi}\widetilde{\alpha}$, thus showing that $b\in\mathcal{S}_{\alpha}$
and thus concluding our proof.
So we must only show our claim that $\lambda_{w}=0$ for all $w\in S_{n}$
satisfying $\widetilde{\operatorname{cLRM}^{\prime}(w)}\npreceq_{\pi
}\widetilde{\alpha}$. We prove this by strong induction on
$\widetilde{\operatorname{cLRM}^{\prime}(w)}$ (with respect to the reversed
partition refinement order). That is, we fix a partition $\mu\vdash n$ with
$\mu\npreceq_{\pi}\widetilde{\alpha}$, and we assume as induction hypothesis
that $\lambda_{w}=0$ is already proved for all $w\in S_{n}$ satisfying
$\mu\prec_{\pi}\widetilde{\operatorname{cLRM}^{\prime}(w)}$ (note that
$\mu\prec_{\pi}\widetilde{\operatorname{cLRM}^{\prime}(w)}$ and $\mu
\npreceq_{\pi}\widetilde{\alpha}$ automatically entail
$\widetilde{\operatorname{cLRM}^{\prime}(w)}\npreceq_{\pi}\widetilde{\alpha}$,
so we need not additionally require the latter). We then must prove that
$\lambda_{w}=0$ for all $w\in S_{n}$ satisfying
$\widetilde{\operatorname{cLRM}^{\prime}(w)}=\mu$.

Let $w\in S_{n}$ satisfy $\widetilde{\operatorname{cLRM}^{\prime}(w)}=\mu$.
Set $\beta:=\operatorname{cLRM}^{\prime}(w)$, so that $\widetilde{\beta
}=\widetilde{\operatorname{cLRM}^{\prime}(w)}=\mu\npreceq_{\pi}%
\widetilde{\alpha}$. Hence, $\mathbf{V}%
_{\beta}\mathbf{B}_{\alpha}=0$ by (\ref{eq.VB-zero}).
Thus, $\mathbf{V}_{\beta}b=0$ as well
(because $b\in\mathcal{R}_{\alpha}=\mathbf{B}_{\alpha}\mathcal{A}$). Hence,%
\begin{align*}
0  & =\mathbf{V}_{\beta}b=\mathbf{V}_{\beta}\cdot\sum_{u\in S_{n}}\lambda
_{u}\,\mathbf{B}_{\cLRM'(u)}\,u\qquad\left(
\text{by (\ref{pf.thm.LRM.filtbasis.exp})}\right)  \\
& =\sum_{u\in S_{n}}\lambda_{u}\,\mathbf{V}_{\beta}\mathbf{B}%
_{\cLRM'(u)}\,u.
\end{align*}
Taking the $\underline w$-coefficient, we thus conclude that%
\begin{equation}
0=\left[  \underline w\right]  \left(  \sum_{u\in S_{n}}\lambda_{u}\,\mathbf{V}_{\beta
}\mathbf{B}_{\cLRM'(u)}\,u\right)  =\sum_{u\in
S_{n}}\lambda_{u}\,\left[  \underline w\right]  \left(  \mathbf{V}_{\beta}\mathbf{B}%
_{\cLRM'(u)}\,u\right)
.\label{pf.thm.LRM.filtbasis.0=}%
\end{equation}
But the sum on the right hand side here has many vanishing addends:
\begin{enumerate}
\item All the addends with $\mu\prec_{\pi}\widetilde{\operatorname{cLRM}%
^{\prime}(u)}$ are $0$ (since the induction hypothesis says that $\lambda
_{u}=0$ for all such $u$).
\item All the addends that satisfy $u\neq w$ but not $\mu\prec_{\pi
}\widetilde{\operatorname{cLRM}^{\prime}(u)}$ are $0$ (since Lemma
\ref{lem.wu2} \textbf{(b)}, applied to $\gamma=\operatorname*{cLRM}%
\nolimits^{\prime}\left(  u\right)  $,
yields $\left[  \underline w\right]  \left(
\mathbf{V}_{\beta}\mathbf{B}_{\cLRM'%
(u)}\,u\right)  =0$).
\end{enumerate}
The only remaining addend is the one for $u=w$; this addend equals
$\lambda_{w}$ (since we have $\left[  \underline w\right]  \left(  \mathbf{V}_{\beta}%
\mathbf{B}_{\cLRM'(w)}\,w\right)  =\left[
\underline w\right]  \left(  \mathbf{V}_{\beta}\mathbf{B}_{\beta}w\right)  =1$ by Lemma
\ref{lem.wu2} \textbf{(a)}). Thus, the whole sum simplifies to $\lambda_w$.
Consequently, (\ref{pf.thm.LRM.filtbasis.0=}) rewrites as $0=\lambda_{w}$,
thus $\lambda_{w}=0$, as desired. This completes the induction step, hence
establishing our claim.
\end{proof}

\section{Eigenvalues}
\label{sec.evals}

We shall now elaborate upon and prove Corollary~\ref{cor.evals}. We begin with
notations for left and right multiplications on $\mathcal{A}$:

\begin{defn}
Let $\mathbf{a}\in\mathcal{A}$. Then, we let $L\left(  \mathbf{a}\right)
:\mathcal{A}\rightarrow\mathcal{A}$ and $R\left(  \mathbf{a}\right)
:\mathcal{A}\rightarrow\mathcal{A}$ be the maps that send each $\mathbf{x}%
\in\mathcal{A}$ to $\mathbf{ax}$ and to $\mathbf{xa}$, respectively. These
maps $L\left(  \mathbf{a}\right)  $ and $R\left(  \mathbf{a}\right)  $ are
$\mathbf{k}$-linear, and are known respectively as \emph{left }and \emph{right
multiplication} by $\mathbf{a}$.
\end{defn}

As linear endomorphisms of the free $\mathbf{k}$-module $\mathcal{A}%
=\mathbf{k}\left[  S_{n}\right]  $, these maps $L\left(  \mathbf{a}\right)  $
and $R\left(  \mathbf{a}\right)  $ can be represented by square matrices with
respect to any basis of $\mathcal{A}$, and these matrices have eigenvalues,
eigenvectors and characteristic polynomials. In general, their eigenvalues can
fail to belong to $\mathbf{k}$; but not so when $\mathbf{a}$ belongs to the
descent algebra $\Sigma_{n}$. Namely:

\begin{corollary}
\label{cor.evals2}The \emph{LRM-basis} shall mean the basis $\bigl\{\mathbf{B}%
_{\LRM'(w)}\,w\;\big|\;w\in S_{n}\bigr\}$ of $\mathcal{A}$
from
Corollary~\ref{cor.LRM.basis}. We shall equip its indexing set $S_{n}$ with a
(strict) total order $\vartriangleleft$, in which two permutations $u,v\in
S_{n}$ satisfy $u\vartriangleleft v$ if $\widetilde{\operatorname*{cLRM}%
\nolimits^{\prime}\left(  u\right)  }\prec_{\pi}%
\widetilde{\cLRM'\left(  v\right)  }$. (This
is, per se, only a partial order, but we pick any linear extension.)

Let $\mathbf{a}=\sum_{\alpha\in\operatorname{Comp}_{n}}\lambda_{\alpha
}\mathbf{B}_{\alpha}$ (with $\lambda_{\alpha}\in\mathbf{k}$) be an arbitrary
element of the descent algebra $\Sigma_{n}$. Then:

\begin{enumerate}
\item[\textbf{(a)}] The map $L\left(  \mathbf{a}\right)  $ is represented by
an upper-triangular matrix with respect to the LRM-basis.

\item[\textbf{(b)}] The eigenvalues of the map $L\left(  \mathbf{a}\right)  $
(listed with their algebraic multiplicities) are the numbers $\sum_{\alpha
\in\operatorname{Comp}_{n}}\lambda_{\alpha}\eta_{\operatorname{cLRM}^{\prime
}(w)}\left(  \alpha\right)  $, where $w$ ranges over $S_{n}$, and where we use
the $\eta_{\beta}\left(  \alpha\right)  $ defined in
Theorem~\ref{thm.Rbeta.act-on-quot}.
\end{enumerate}
\end{corollary}

\begin{proof}
\textbf{(a)} The map $L\left(  \mathbf{a}\right)  $ depends $\mathbf{k}%
$-linearly on $\mathbf{a}$. Thus, from $\mathbf{a}=\sum_{\alpha\in
\operatorname{Comp}_{n}}\lambda_{\alpha}\mathbf{B}_{\alpha}$, we obtain
$L\left(  \mathbf{a}\right)  =\sum_{\alpha\in\operatorname{Comp}_{n}}%
\lambda_{\alpha}L\left(  \mathbf{B}_{\alpha}\right)  $. Hence, it suffices to
show that each of the $L\left(  \mathbf{B}_{\alpha}\right)  $ is represented
by an upper-triangular matrix with respect to the LRM-basis. In other words,
it suffices to show that for each $\alpha\in\Comp_{n}$
and $v\in S_{n}$, we have%
\begin{align}
& \left(  L\left(  \mathbf{B}_{\alpha}\right)  \right)  \left(  \mathbf{B}%
_{\LRM'(v)}\,v\right)  \nonumber\\
& =\left(  \text{a linear combination of }\mathbf{B}_{\operatorname{LRM}%
^{\prime}(u)}\,u\text{ with }u\trianglelefteq v\right)
\label{pf.cor.evals2.a.goal}%
\end{align}
(where \textquotedblleft$\trianglelefteq$\textquotedblright\ means
\textquotedblleft$\vartriangleleft$ or $=$\textquotedblright). Let $\alpha
\in\Comp_{n}$ and $v\in S_{n}$. Set $\beta
=\cLRM'\left(  v\right)  $. Then,
$\mathbf{B}_{\beta}=\mathbf{B}_{\LRM'(v)}$.
Meanwhile,
\begin{align}
\left(  L\left(  \mathbf{B}_{\alpha}\right)  \right)  \left(
\mathbf{B}_{\beta}\,v\right)
&  =\mathbf{B}_{\alpha}\ \mathbf{B}_{\beta}\,v\qquad\left(  \text{by the
definition of }L\left(  \mathbf{B}_{\alpha}\right)  \right)  \nonumber\\
&  =\left(  \eta_{\beta}\left(  \alpha\right)  \ \mathbf{B}_{\beta}+\left(
\text{a sum of }\mathbf{B}_{\alpha^{\prime}}\text{ with }\alpha^{\prime}%
\prec\beta\right)  \right)  v\qquad\left(  \text{by \eqref{eq.BaBb=sum-prec}}%
\right)  \nonumber\\
&  =\eta_{\beta}\left(  \alpha\right)  \ \mathbf{B}_{\beta}v+\left(  \text{a
sum of }\mathbf{B}_{\alpha^{\prime}}v\text{ with }\alpha^{\prime}\prec
\beta\right) .\nonumber
\end{align}
In view of%
\begin{align*}
&  \left(  \text{a sum of }\mathbf{B}_{\alpha^{\prime}}v\text{ with }%
\alpha^{\prime}\prec\beta\right)  \\
&  \in\sum_{\substack{\alpha^{\prime}\in\operatorname{Comp}_{n};\\\alpha
^{\prime}\prec\beta}}\mathcal{R}_{\alpha^{\prime}}\qquad\left(  \text{since
}\mathbf{B}_{\alpha^{\prime}}v\in\mathbf{B}_{\alpha^{\prime}}\mathcal{A}%
=\mathcal{R}_{\alpha^{\prime}}\right)  \\
&  \subseteq\sum_{\substack{\alpha^{\prime}\in\operatorname{Comp}%
_{n};\\\widetilde{\alpha^{\prime}}\prec_{\pi}\widetilde{\beta}}}\mathcal{R}%
_{\alpha^{\prime}}\qquad\left(  \text{because }\alpha^{\prime}\prec\beta\text{
entails }\widetilde{\alpha^{\prime}}\prec_{\pi}\widetilde{\beta}\right)  \\
&  =\sum_{\substack{\alpha^{\prime}\in\operatorname{Comp}_{n}%
;\\\widetilde{\alpha^{\prime}}\prec_{\pi}\widetilde{\beta}}%
}\operatorname*{span}\left\{  \mathbf{B}_{\operatorname{LRM}^{\prime}%
(u)}\,u\text{ }\mid\text{ }\widetilde{\operatorname{cLRM}^{\prime}\left(
u\right)  }\preceq_{\pi}\widetilde{\alpha^{\prime}}\right\}  \\
&  \qquad\qquad\left(  \text{by Theorem \ref{thm.LRM.filtbasis}, summed over
all $\alpha^{\prime}$ with $\widetilde{\alpha^{\prime}}\prec_{\pi
}\widetilde{\beta}$}\right)  \\
&  =\operatorname*{span}\left\{  \mathbf{B}_{\operatorname{LRM}^{\prime}%
(u)}\,u\text{ }\mid\text{ }\widetilde{\operatorname{cLRM}^{\prime}\left(
u\right)  }\prec_{\pi}\widetilde{\beta}\right\}  \\
&  \qquad\qquad\left(  \text{since }\widetilde{\operatorname{cLRM}^{\prime
}\left(  u\right)  }\prec_{\pi}\widetilde{\beta}\text{ if and only if
}\widetilde{\operatorname{cLRM}^{\prime}\left(  u\right)  }\preceq_{\pi
}\widetilde{\alpha^{\prime}}\text{ for some }\widetilde{\alpha^{\prime}}%
\prec_{\pi}\widetilde{\beta}\right)  \\
&  \subseteq\operatorname*{span}\left\{  \mathbf{B}_{\operatorname{LRM}%
^{\prime}(u)}\,u\text{ }\mid\text{ }u\vartriangleleft v\right\}  \\
&  \qquad\qquad\left(  \text{since }\widetilde{\operatorname{cLRM}^{\prime
}\left(  u\right)  }\prec_{\pi}\widetilde{\beta}%
=\widetilde{\operatorname*{cLRM}\nolimits^{\prime}\left(  v\right)  }\text{
entails }u\vartriangleleft v\right)  ,
\end{align*}
this leads to%
\begin{equation}
\left(  L\left(  \mathbf{B}_{\alpha}\right)  \right)  \left(  \mathbf{B}%
_{\beta}\,v\right)  \in\eta_{\beta}\left(  \alpha\right)  \ \mathbf{B}_{\beta
}\ v+\operatorname*{span}\left\{  \mathbf{B}_{\operatorname{LRM}^{\prime}%
(u)}\,u\text{ }\mid\text{ }u\vartriangleleft v\right\}  .\nonumber
\end{equation}
In view of
$\mathbf{B}_{\beta}=\mathbf{B}_{\LRM'(v)}$ and
$\beta=\cLRM'\left(  v\right)  $, this
rewrites as%
\begin{align}
& \left(  L\left(  \mathbf{B}_{\alpha}\right)  \right)  \left(  \mathbf{B}%
_{\LRM'(v)}\,v\right)  \nonumber\\
& \in\eta_{\cLRM'\left(  v\right)  }\left(
\alpha\right)  \ \mathbf{B}_{\LRM'(v)}%
v+\operatorname*{span}\left\{  \mathbf{B}_{\LRM'%
(u)}\,u\text{ }\mid\text{ }u\vartriangleleft v\right\}
\label{pf.cor.evals2.a.4}\\
& \subseteq\operatorname*{span}\left\{  \mathbf{B}_{\operatorname{LRM}%
^{\prime}(u)}\,u\text{ }\mid\text{ }u\trianglelefteq v\right\}  ,\nonumber
\end{align}
which is precisely (\ref{pf.cor.evals2.a.goal}). Thus, Corollary
\ref{cor.evals2} \textbf{(a)} is proved.
\medskip

\textbf{(b)} By part \textbf{(a)}, the map $L\left(  \mathbf{a}\right)  $ is
represented by an upper-triangular matrix with respect to the LRM-basis.
Hence, the eigenvalues of $L\left(  \mathbf{a}\right)  $ are the diagonal
entries of this matrix. But $L\left(  \mathbf{a}\right)  =\sum_{\alpha
\in\operatorname{Comp}_{n}}\lambda_{\alpha}L\left(  \mathbf{B}_{\alpha
}\right)  $ (as we saw in the proof of part \textbf{(a)}). Hence, for each
$v\in S_{n}$, we have%
\begin{align*}
& \left(  L\left(  \mathbf{a}\right)  \right)  \left(  \mathbf{B}%
_{\LRM'(v)}\,v\right)  \\
& =\sum_{\alpha\in\operatorname{Comp}_{n}}\lambda_{\alpha}\underbrace{\left(
L\left(  \mathbf{B}_{\alpha}\right)  \right)  \left(  \mathbf{B}%
_{\LRM'(v)}\,v\right)  }_{\substack{\in\eta
_{\cLRM'\left(  v\right)  }\tup{\alpha}\ \mathbf{B}%
_{\LRM'(v)}v+\operatorname*{span}\left\{  \mathbf{B}%
_{\LRM'(u)}\,u\text{ }\mid\text{ }u\vartriangleleft
v\right\}  \\\text{(by (\ref{pf.cor.evals2.a.4}))}}}\\
& \in\sum_{\alpha\in\operatorname{Comp}_{n}}\lambda_{\alpha}\left(
\eta_{\cLRM'\left(  v\right)  }\left(
\alpha\right)  \ \mathbf{B}_{\LRM'(v)}%
v+\operatorname*{span}\left\{  \mathbf{B}_{\LRM'%
(u)}\,u\text{ }\mid\text{ }u\vartriangleleft v\right\}  \right)  \\
& \subseteq\left(  \sum_{\alpha\in\operatorname{Comp}_{n}}
\lambda_{\alpha} \eta_{\cLRM'\left(  v\right)  }\tup{\alpha}\right)
\mathbf{B}_{\LRM'(v)}v+\operatorname*{span}\left\{
\mathbf{B}_{\LRM'(u)}\,u\text{ }\mid\text{
}u\vartriangleleft v\right\}  .
\end{align*}
In other words, the diagonal entries of the matrix that represents $L\left(
\mathbf{a}\right)  $ with respect to the LRM-basis are the scalars
$\sum_{\alpha\in\operatorname{Comp}_{n}}\lambda_{\alpha}\eta
_{\operatorname{cLRM}^{\prime}(v)}\left(  \alpha\right)  $ for all $v\in
S_{n}$. So these scalars are the eigenvalues of $L\left(  \mathbf{a}\right)
$. This proves Corollary \ref{cor.evals2} \textbf{(b)}.
\end{proof}

Part \textbf{(b)} (but not part \textbf{(a)}) of Corollary \ref{cor.evals2}
holds also for $R\left(  \mathbf{a}\right)  $ instead of $L\left(
\mathbf{a}\right)  $. This follows from Corollary
\ref{cor.evals2} \textbf{(b)} via
the following fact that has nothing to do with $\Sigma_{n}$:

\begin{proposition}
\label{prop.L=R}Let $\mathbf{a}\in\mathcal{A}$ be arbitrary. Then:

\begin{enumerate}
\item[\textbf{(a)}] The linear maps $L\left(  \mathbf{a}\right)  $ and
$R\left(  \mathbf{a}\right)  $ have the same characteristic polynomial, thus
the same eigenvalues (with the same algebraic multiplicities).

\item[\textbf{(b)}] Let $X$ be the matrix representing the map $L\left(
\mathbf{a}\right)  $ with respect to the basis $\left(  w\right)  _{w\in
S_{n}}$ of $\mathcal{A}$. Let $Y$ be the matrix representing the map $R\left(
\mathbf{a}\right)  $ with respect to the basis $\left(  w^{-1}\right)  _{w\in
S_{n}}$ of $\mathcal{A}$. Then, $X=Y^{T}$.
\end{enumerate}
\end{proposition}

\begin{proof}
\textbf{(b)} By linearity, it suffices to prove this when $\mathbf{a}$ is a
single permutation $p\in S_{n}$. In this case, the claim boils down to
\textquotedblleft$pu=v$ if and only if $v^{-1}p=u^{-1}$\textquotedblright. But
the latter is a trivial calculation.
\medskip

\textbf{(a)} This follows from part \textbf{(b)}, since mutually transpose
matrices have the same characteristic polynomial.
\end{proof}

\section{Questions}
\label{sec.quests}

Having proved the main results, we propose two questions.

\begin{enumerate}

\item[\textbf{1.}]
On the basis $\bigl\{
\mathbf{B}_{\operatorname{LRM}'(w)}\, w
\;\big|\; w\in S_n \bigr\}$
from Corollary~\ref{cor.LRM.basis}, left multiplication
by any element of $\Sigma_n$ acts as a triangular matrix.
However, right multiplication generally does not.
Is there a basis that represents both left and right
multiplication by triangular matrices?
Theoretical reasoning using the structure of $\Sigma_n$
shows that the answer is ``yes''\footnote{Indeed, if
$\kk$ is a field of characteristic $0$, then the
$\kk$-algebra $\Sigma_n$ becomes a direct product of
$\kk$'s after we quotient it by its Jacobson radical
(\cite[\S 5.2]{Sch05}).
Thus, the same is true of the algebra
$\Sigma_n \otimes \Sigma_n^{\operatorname{op}}$.
Hence, each
left $\Sigma_n \otimes \Sigma_n^{\operatorname{op}}$-module
over a field of characteristic $0$ has a basis on which
$\Sigma_n \otimes \Sigma_n^{\operatorname{op}}$ acts by
triangular matrices.
In other words, each
\((\Sigma_n,\Sigma_n)\)-bimodule
over a field of characteristic $0$ has a basis on which
\(\Sigma_n\) acts by triangular matrices from both left
and right.
This applies in particular to the
\((\Sigma_n,\Sigma_n)\)-bimodule \(\mathcal{A}\).
Using the structure of finitely generated modules over
a PID, this can be
extended to $\kk = \ZZ$, and thus to all $\kk$.}, but does not
suggest a concrete basis.\footnote{NB: The
analogue of Lemma~\ref{prop.Rbeta.sym} for the left
ideals $\mathcal{L}_\alpha := \mathcal{A} \BB_\alpha$
instead of the right ideals $\mathcal{R}_\alpha$ is false.}

\item[\textbf{2.}]
A descent algebra can be defined for any finite Coxeter group \cite{Sol76}.
It is desirable to know if an analogue of LRMs, and a basis such
as the one in Corollary~\ref{cor.LRM.basis}, still exists in this generality.
Note that \cite[\S 3]{ChenGongGuo13} defines an analogue of LRMs in type $B$.

\end{enumerate}

\printbibliography

@article{NCSF1,
	Author = {Gelfand, I.M. and Krob, D. and Lascoux, A. and Leclerc, B. and Retakh, V.S. and Thibon, J.Y.},
	Date-Added = {2025-11-15 21:24:49 +0100},
	Date-Modified = {2025-11-15 21:30:45 +0100},
	Journal = {Advances in Mathematics},
	Month = {May},
	Number = {2},
	Pages = {218--348},
	Title = {Noncommutative Symmetric Functions},
	Volume = {112},
	Year = {1995},
	note = {\href{https://arxiv.org/abs/hep-th/9407124v1}{arXiv:hep-th/9407124v1}},
	}

@article{Sch05,
	Author = {M. Schocker},
	Date-Added = {2024-11-11 22:06:28 +0100},
	Date-Modified = {2024-11-11 22:06:28 +0100},
	Journal = {Advances in mathematics},
	Number = {2},
	Pages = {259--309},
	Title = {The peak algebra of the symmetric group revisited},
	Volume = {192},
	Year = {2005},
	doi = {10.1016/j.aim.2004.04.007}
	}

@article{GarReu89,
	Author = {Garsia, A. and Reutenauer, C.},
	Date-Added = {2016-10-29 09:35:10 +0000},
	Date-Modified = {2019-03-31 20:34:24 +0000},
	Journal = {Advances in mathematics},
	Pages = {189--262},
	Title = {A decomposition of {S}olomon's descent algebra},
	Volume = {77},
	Year = {1989},
    doi = {10.1016/0001-8708(89)90020-0}}

@article{Sol76,
	Author = {Solomon, L.},
	Date-Added = {2016-10-29 09:19:51 +0000},
	Date-Modified = {2019-03-31 20:38:38 +0000},
	Journal = {Journal of Algebra},
	Pages = {255--264},
	Title = {A {M}ackey formula in the group ring of a {C}oxeter group},
	Volume = {41},
	Year = {1976},
	doi = {10.1016/0021-8693(76)90182-4},}

@article{Willigenburg98,
	Author = {van Willigenburg, Stephanie},
	Doi = {10.1006/jabr.1998.7427},
	Fjournal = {Journal of Algebra},
	Issn = {0021-8693},
	Journal = {J. Algebra},
	Keywords = {20C05,20F55,20H15,20C30},
	Language = {English},
	Number = {2},
	Pages = {693--698},
	Title = {A proof of {Solomon}'s rule},
	Volume = {206},
	Year = {1998},
	Zbl = {0973.20005},
	Zbmath = {1213841},
	Bdsk-Url-1 = {https://doi.org/10.1006/jabr.1998.7427}}

@book{Reutenauer1993,
	Author = {Reutenauer, Christophe},
	Fseries = {London Mathematical Society Monographs. New Series},
	Isbn = {0-19-853679-8},
	Keywords = {17B01,17-02,05E05,20C15,20F12,05E10,17B40,20F14,68R15,17B35,17A50,94A45},
	Language = {English},
	Publisher = {Oxford: Clarendon Press},
	Series = {Lond. Math. Soc. Monogr., New Ser.},
	Title = {Free {Lie} algebras},
	xVolume = {7},
	Year = {1993},
	Zbl = {0798.17001},
	Zbmath = {417855}}

@article {RSW2014,
    AUTHOR = {Reiner, Victor and Saliola, Franco and Welker, Volkmar},
     TITLE = {Spectra of symmetrized shuffling operators},
   JOURNAL = {Mem. Amer. Math. Soc.},
  FJOURNAL = {Memoirs of the American Mathematical Society},
    VOLUME = {228},
      YEAR = {2014},
    NUMBER = {1072},
     PAGES = {vi+109},
      ISSN = {0065-9266,1947-6221},
      ISBN = {978-0-8218-9095-0},
   MRCLASS = {05E15 (20F55 62Qxx)},
  MRNUMBER = {3184410},
  doi = {10.1090/memo/1072},
  note = {\href{https://arxiv.org/abs/1102.2460v2}{arXiv:1102.2460v2}},
}

@phdthesis{Bidigare-thesis,
	Author = {Bidigare, Patrick},
	School = {Univ. Michigan},
	Title = {Hyperplane arrangement face algebras and their associated {M}arkov chains},
	Year = {1997}
}

@article{ChenGongGuo13,
 author = {Chen, William Y. C. and Gong, George Z. and Guo, Jeremy J. F.},
 title = {The sorting index and permutation codes},
 fjournal = {Advances in Applied Mathematics},
 journal = {Adv. Appl. Math.},
 issn = {0196-8858},
 volume = {50},
 number = {3},
 pages = {367--389},
 year = {2013},
 language = {English},
 doi = {10.1016/j.aam.2012.10.004},
 keywords = {05A05,05A15,20F55},
 zbMATH = {6142242},
 Zbl = {1259.05003}
}

\end{document}